\newtoks\prt 
\numberwithin{equation}{section}
\newtheorem{thm}{Theorem}[section]
\newtheorem{question}[thm]{Question} 
\newtheorem{lemma}[thm]{Lemma} 
\newtheorem{cor}[thm]{Corollary} 
\newtheorem{example}[thm]{Example}
\theoremstyle{definition}
\def\eqn#1$$#2$${\begin{equation}\label#1#2\end{equation}}
\def\diam{\operatorname{diam}} 
\def\ep{\varepsilon} 
\def\en{\mathbb N} 
\def\er{\mathbb R} 
\def\dist{\operatorname{dist}} 
\def\r{|}
\def\Ker{\operatorname{Ker}}
\def \reg {\partial _{\kern1pt\text{reg}}}
\def\la{\langle}
\def\ra{\rangle}
\def\dh{\widehat{\operatorname{d}}}
\def\clu#1#2{\operatorname{clust}_{#1^{**}}(#2)}
\def\wde#1{\widetilde{\delta}\left(#1\right)}
\def\de#1{\delta\left(#1\right)}
\newcommand{\ca}[2][]{\operatorname{ca}_{#1}\left(#2\right)}
\newcommand{\wca}[2][]{\widetilde{\operatorname{ca}}_{#1}\left(#2\right)}
\newcommand{\cc}[2][]{\operatorname{cc}_{#1}\left(#2\right)}
\newcommand{\wk}[2][X]{\operatorname{wk}_{#1}\left(#2\right)}
\begin{document}

\title{On quantitative Schur and Dunford-Pettis properties}
\author{Ond\v{r}ej F.K. Kalenda and Ji\v{r}\'{\i} Spurn\'y}

\address{Department of Mathematical Analysis \\
Faculty of Mathematics and Physic\\ Charles University\\
Sokolovsk\'{a} 83, 186 \ 75\\Praha 8, Czech Republic}

\email{kalenda@karlin.mff.cuni.cz}
\email{spurny@karlin.mff.cuni.cz}

\subjclass[2010]{46B25}
\keywords{quantitative Schur property; quantitative Dunford-Pettis property}

\thanks{Our research was supported in part by the grant GA\v{C}R P201/12/0290. The second author was also
supported by The Foundation of Karel Jane\v{c}ek for Science and Research.}

\begin{abstract} 
We show that the dual to any subspace of $c_0(\Gamma)$ has the strongest possible quantitative version of the Schur property. Further, we establish relationship between the quantitative Schur property and quantitative versions of the Dunford-Pettis property. Finally, we apply these results to show, in particular, that any subspace of the space of compact operators on $\ell_p$ ($1<p<\infty$) with Dunford-Pettis property satisfies automatically both its quantitative versions.
\end{abstract}
\maketitle


\section{The main result}

A Banach space $X$ is said to have the \emph{Schur property} if any weakly null sequence in $X$ converges to zero in norm. Equivalently, $X$ has the Schur property if every weakly Cauchy sequence is norm Cauchy.
The classical example of a space with the Schur property is the space $\ell_1$ of all absolutely summable sequences.

A quantitative version of the Schur property was introduced and studied in \cite{qschur}. Let us recall the definition.
If $(x_k)$ is a bounded sequence in a Banach space $X$, we set (following \cite{qschur})
$$\ca{x_k}=\inf_{n\in\en}\diam\{x_k:k\ge n\}$$
and
$$\de{x_k}=\sup_{x^*\in B_{X^*}} \inf_{n\in\en} \diam\{x^*(x_k):k\ge n\}.$$
Then the quantity $\ca{\cdot}$ measures how far the sequence is from being norm Cauchy, while the quantity $\de{\cdot}$ measures how far it is from being weakly Cauchy. It is easy to check that the quantity $\de{x_k}$ can be alternatively described as the diameter of the set of all weak* cluster points of $(x_k)$ in $X^{**}$.
Following again \cite{qschur}, a Banach space $X$ is said to have the \emph{$C$-Schur property} (where $C\ge 0$) if
\begin{equation}
\label{eq:qsch1}
\ca{x_k}\le C \de{x_k}
\end{equation}
for any bounded sequence $(x_k)$ in $X$. Since obviously $\de{x_k}\le \ca{x_k}$ for any bounded sequence $(x_k)$, necessarily $C\ge 1$ (unless $X$ is the trivial space). Moreover, if $X$ has the $C$-Schur property for some $C\ge 1$, it easily follows that $X$ has the Schur property. Indeed, if $(x_k)$ is weakly Cauchy in $X$, then $\de{x_k}=0$, and thus $\ca{x_k}=0$. The space constructed in \cite[Example 1.4]{qschur} serves as an example of a Banach space with the Schur property without the $C$-Schur property for any $C>0$. On the other hand, $\ell_1(\Gamma)$ possesses the $1$-Schur property (see \cite[Theorem~1.3]{qschur}). 
Our main result is the following generalization of the quoted theorem.

\begin{thm}
\label{t:c01}
Let $X$ be a subspace of $c_0(\Gamma)$. Then $X^*$ has the $1$-Schur property.
\end{thm}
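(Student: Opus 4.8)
The first move is a routine localisation. Since we only need to verify $\ca{\phi_k}\le\de{\phi_k}$ for one fixed bounded sequence $(\phi_k)$ in $X^*$, and since for a closed subspace $Y\subseteq X$ the map $\iota^{**}\colon Y^{**}\to X^{**}$ induced by the inclusion is an isometry, we always have $\de[Y]{\phi_k|_Y}\le\de{\phi_k}$; on the other hand only countably many elements of $B_X$ are needed to compute $\ca{\phi_k}$, and taking $Y$ to be their closed span gives a separable $Y$ with $\ca[Y]{\phi_k|_Y}=\ca{\phi_k}$. Hence it suffices to treat separable $X$, and then $X\subseteq c_0(\Gamma_0)$ for a countable $\Gamma_0$, so we may assume $\Gamma=\en$. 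In particular $X^*$, being a quotient of $\ell_1$, is separable, so $B_{X^{**}}$ is weak${}^*$ metrizable; this will let us extract weak${}^*$ convergent \emph{sequences} in $X^{**}$. Throughout I identify $X^{**}$ with the weak${}^*$ closed subspace $(X^\perp)^\perp$ of $\ell_\infty(\en)=c_0(\en)^{**}$ and note that the weak${}^*$ topology of $X^{**}$ coincides with the one inherited from $(\ell_\infty(\en),\sigma(\ell_\infty,\ell_1))$.

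Since $\de{\phi_k}\le\ca{\phi_k}$ holds trivially, it remains to prove $\de{\phi_k}\ge\ca{\phi_k}=:c$, and we may assume $c>0$. Using $\de{\phi_k}=\sup_{F\in B_{X^{**}}}\inf_n\diam\{\langle F,\phi_k\rangle\colon k\ge n\}$, it is enough, for each $\varepsilon\in(0,c)$, to produce some $F\in B_{X^{**}}$ with $\limsup_k\langle F,\phi_k\rangle-\liminf_k\langle F,\phi_k\rangle\ge c-\varepsilon$ (I work with real scalars; the complex case follows by passing to real parts). Because $\diam\{\phi_j\colon j\ge n\}\ge c$ for every $n$, one can pick indices $p_1<q_1<p_2<q_2<\cdots$ tending to infinity, vectors $x_n\in B_X$, and norm-attaining liftings $\lambda_n\in\ell_1(\en)$ of $\phi_{p_n}-\phi_{q_n}$ with $\|\lambda_n\|_1=\|\phi_{p_n}-\phi_{q_n}\|_{X^*}>c-\varepsilon$ and $\langle\lambda_n,x_n\rangle=\langle\phi_{p_n}-\phi_{q_n},x_n\rangle>c-\varepsilon$. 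Discarding pairs, we may assume $x_n\to F$ in $\sigma(X^{**},X^*)$ for some $F\in B_{X^{**}}$ and $\lambda_n\to\lambda_\infty$ in $\sigma(\ell_1,c_0)$; a gliding-hump argument then lets us assume $\lambda_n-\lambda_\infty=\rho_n+e_n$ with the $\rho_n$ finitely supported on pairwise disjoint blocks $B_n$ marching off to infinity (disjoint also from a fixed finite set carrying almost all of $\lambda_\infty$), $\sum_n\|e_n\|_1<\infty$, and $\|\rho_n\|_1\to r$, where necessarily $r\ge c-\|\lambda_\infty\|_1$.

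The real difficulty — and the reason the statement is not a formal consequence of the $1$-Schur property of $\ell_1(\Gamma)$ — is that the obvious candidate norming functional, namely $\operatorname{sgn}\lambda_\infty$ on the support of $\lambda_\infty$ together with $\operatorname{sgn}\rho_n$ on each $B_n$ (which would give oscillation $\|\lambda_\infty\|_1+r\ge c$), has norm one but in general does \emph{not} lie in $X^{**}=(X^\perp)^\perp$: altering a bidual element on a block destroys the condition of annihilating $X^\perp$, precisely because $X$ need not be invariant under coordinate projections. The way around this is to build the norming functional only out of the vectors $x_n$, which belong to $X$ and so automatically kill $X^\perp$. As $\|\lambda_n\|_1$ and $\langle\lambda_n,x_n\rangle$ are both within $\varepsilon$ of $c$, the vector $x_n$ is forced, in the sense weighted by $|\lambda_n|$, to be close to $\operatorname{sgn}\lambda_n$, hence close to $\operatorname{sgn}\rho_n$ on $B_n$; after a further thinning and, if necessary, flipping signs on alternate blocks so as to create oscillation to both sides, the weak${}^*$ limit $F$ of (a norm-small modification of) the $x_n$ lies in $B_{X^{**}}$ and realises $\limsup_k\langle F,\phi_k\rangle-\liminf_k\langle F,\phi_k\rangle\ge c-\varepsilon$ along the sub-collections of indices $p_n$ and $q_n$. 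I expect the main obstacle to be the bookkeeping making this last assertion precise: the indices, liftings and blocks must be chosen by an interleaved induction so that each newly chosen $x_n$ contributes negligibly on the blocks already fixed, which is exactly what lets one control the \emph{individual} values $\langle F,\phi_{p_n}\rangle$ and $\langle F,\phi_{q_n}\rangle$ rather than only their differences; in particular the selection must be driven by the differences $\phi_{p_n}-\phi_{q_n}$ directly, since a weak${}^*$ convergent subsequence of $(\phi_k)$ alone need not carry the full value of $\ca{\phi_k}$.
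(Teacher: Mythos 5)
Your global separable reduction is fine, your diagnosis of the main obstruction (coordinate projections do not preserve $X$, so the $1$-Schur property of $\ell_1(\Gamma)$ cannot simply be transported to the quotient $X^*$) is exactly right, and the opening moves --- passing to differences, lifting to $\ell_1$, splitting off the weak$^*$ limit $\lambda_\infty$ by a gliding hump --- match the paper's proof. The gaps are in the endgame, and they are not bookkeeping. First, a single $x_n\in B_X$ cannot in general both nearly norm $\lambda_n$ (hence be close to $\operatorname{sgn}\lambda_\infty$ where $\lambda_\infty$ has its mass) and be negligible on the blocks already fixed: take $X=\{x\in c_0:x(1)=x(2)\}$, $\lambda_\infty=e_1^*$ and $B_1=\{2\}$; any $x\in X$ vanishing on $B_1$ vanishes at the coordinate $1$ and so kills the $\lambda_\infty$-part entirely. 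The paper therefore never tries to norm $\lambda_\infty$ and the $\rho_n$ by one interleaved family: it subtracts the weak$^*$ limit, constructs in Lemma~\ref{l:c02} almost disjointly supported $x_k\in B_X$ that see only the singular parts, and recovers the $\lambda_\infty$-contribution at the very end from the $\ell_1$-additivity of the decomposition $\ell_\infty(\Gamma)^*=\ell_1(\Gamma)\oplus_1 c_0(\Gamma)^\perp$ applied to a difference of two weak$^*$ cluster points of $(\phi_k)$ in $X^{***}$ --- in particular it bounds the diameter of the cluster set rather than exhibiting a single norming $F\in B_{X^{**}}$.

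Second, even for the singular parts your candidate $F$ is the wrong object: if $x_n|_{B_m}\approx 0$ for $m<n$, then the weak$^*$ limit of the $x_n$ vanishes on every block, so $\langle F,\rho_m\rangle\approx 0$ and the oscillation you capture is only about $\|\lambda_\infty\|_1$ (which is $0$ when $(\phi_k)$ is weak$^*$ null); what works is the weak$^*$ limit of the partial sums $\sum_{i\le n}x_i$, which stays in $(1+\ep)B_{X^{**}}$ precisely because the $x_i$ are almost disjointly supported in the sup norm and which is $\approx\operatorname{sgn}\rho_m$ on each $B_m$. No norm-small modification of the individual $x_n$ can repair this. Third --- and this is the analytic heart that your sketch defers to ``bookkeeping'' --- the existence of $x_n\in B_X$ vanishing on a prescribed finite set of coordinates while still nearly norming the singular part of $\lambda_n$ is false for a single functional (again $X=\operatorname{span}\{e_1+e_2\}$ with $\lambda=e_1^*$) and holds only asymptotically along a weak$^*$-null sequence; this is Lemma~\ref{l:c01}, which rests on the $\ell_1$-type identity $\limsup\|x_n^*+x^*\|=\|x^*\|+\limsup\|x_n^*\|$ of Lemma~\ref{m1} (Kalton--Werner's property $(M^*)$ machinery). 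A smaller slip: you must pick $x_n$ with $\langle\lambda_n,x_n\rangle>\|\lambda_n\|_1-\varepsilon_n$, not merely $>c-\varepsilon$, since $\|\lambda_n\|_1$ is only bounded below by $c-\varepsilon$ and may be much larger, and otherwise $x_n$ is not forced to resemble $\operatorname{sgn}\lambda_n$.
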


Let us now proceed to the proof of the main result.

We will need some lemmas. The first one establishes a special property of the norm on $c_0(\Gamma)$ and its subspaces.

\begin{lemma}\label{m1} Let $X$ be a subspace of $c_0(\Gamma)$. Then for any $x^*\in X^*$ and any sequence $(x_n^*)$ in $X^*$ which weak$^*$ converges to $0$ we have
$$\limsup\|x_n^*+x^*\|=\|x^*\|+\limsup\|x_n^*\|.$$
\end{lemma}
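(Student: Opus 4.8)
Throughout write $B_X$, $B_{X_F}$, etc.\ for closed unit balls. The inequality $\limsup\|x_n^*+x^*\|\le\|x^*\|+\limsup\|x_n^*\|$ is immediate from the triangle inequality, so the real content is the reverse one, and the plan is to produce, for each $\ep>0$ and all large $n$, a point $z_n\in B_X$ with $(x^*+x_n^*)(z_n)$ close to $\|x^*\|+\|x_n^*\|$. First choose $x\in B_X$ with $x^*(x)>\|x^*\|-\ep$ and a Hahn--Banach extension $\tilde x^*\in\ell_1(\Gamma)=c_0(\Gamma)^*$ of $x^*$ (so $\|\tilde x^*\|_1=\|x^*\|$). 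Since $x\in c_0(\Gamma)$ and $\tilde x^*\in\ell_1(\Gamma)$, there is a \emph{finite} set $F\subseteq\Gamma$ with $|x(\gamma)|<\ep$ for all $\gamma\notin F$ and $\sum_{\gamma\notin F}|\tilde x^*(\gamma)|<\ep$; thus $x$ and $x^*$ are both essentially concentrated on $F$.

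The key point, which I single out as a claim, is that a weak$^*$-null sequence loses no norm when its functionals are forced to vanish on $F$: writing $X_F:=\{y\in X:y|_F=0\}$, one has $\limsup_n\|x_n^*|_{X_F}\|=\limsup_n\|x_n^*\|=:L$. Granting this, pass to a subsequence along which $\|x_n^*\|\to L$ and $\|x_n^*|_{X_F}\|\to L$, pick $y_n\in B_{X_F}$ with $x_n^*(y_n)>\|x_n^*|_{X_F}\|-\ep$, and set $z_n:=x+y_n\in X$. Checking coordinates on $F$ (where $y_n$ vanishes and $|x|\le1$) and off $F$ (where $|x|<\ep$) gives $\|z_n\|\le1+\ep$. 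Since $y_n|_F=0$ we have $|x^*(y_n)|=|\sum_{\gamma\notin F}\tilde x^*(\gamma)y_n(\gamma)|<\ep$, so $x^*(z_n)>\|x^*\|-2\ep$; and $x_n^*(z_n)=x_n^*(x)+x_n^*(y_n)$ with $x_n^*(x)\to0$ by weak$^*$ convergence and $x_n^*(y_n)>\|x_n^*|_{X_F}\|-\ep$. Dividing $(x^*+x_n^*)(z_n)$ by $\|z_n\|$, letting $n\to\infty$ and then $\ep\to0$, one obtains $\limsup\|x_n^*+x^*\|\ge\|x^*\|+L$.

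To prove the claim I would work in $X^*=\ell_1(\Gamma)/X^\perp$ (with quotient map $q\colon\ell_1(\Gamma)\to X^*$) and use that weak$^*$-null sequences ``escape to infinity'' in $\ell_1(\Gamma)$. Using $\|x_n^*|_{X_F}\|=\dist(x_n^*,X_F^\perp)$ and the identification $X_F^\perp=q(\ell_1(F))$, a direct computation in the quotient gives $\dist(x_n^*,X_F^\perp)=\inf\{\|\mu|_{\Gamma\setminus F}\|_1:\mu\in\ell_1(\Gamma),\ q(\mu)=x_n^*\}$. Suppose, for contradiction, that $d:=\limsup_n\dist(x_n^*,X_F^\perp)<L$; pass to a subsequence with $\|x_n^*\|\to L$ and $\dist(x_n^*,X_F^\perp)\to d$, and choose lifts $\mu_n$ of $x_n^*$ with $\|\mu_n|_{\Gamma\setminus F}\|_1\le d+o(1)$ and, among those, with $\|\mu_n|_F\|_1$ as small as possible; a routine finite-dimensional estimate shows $(\mu_n|_F)$ is bounded, hence so is $(\mu_n)$ in $\ell_1(\Gamma)$. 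As every $\mu_n$ has countable support, a further subsequence converges coordinatewise to some $\mu\in\ell_1(\Gamma)$; since $q$ is weak$^*$-to-weak$^*$ continuous and $x_n^*\to0$ weak$^*$, $\mu\in X^\perp$, so each $\mu_n-\mu$ is again a lift of $x_n^*$ and therefore $\|\mu_n-\mu\|_1\ge\|x_n^*\|$. Applying the elementary case $X=c_0(\Gamma)$ of the lemma (proved directly by a one-line sliding-hump computation) to the coordinatewise-null sequence $\mu_n-\mu$ and the fixed vector $\mu$ yields
\[
\limsup_n\|\mu_n\|_1=\|\mu\|_1+\limsup_n\|\mu_n-\mu\|_1\ge\|\mu\|_1+L .
\]
On the other hand $\|\mu_n\|_1=\|\mu_n|_F\|_1+\|\mu_n|_{\Gamma\setminus F}\|_1$ with $\|\mu_n|_F\|_1\to\|\mu|_F\|_1$ and $\|\mu_n|_{\Gamma\setminus F}\|_1\le d+o(1)$, so $\limsup_n\|\mu_n\|_1\le\|\mu|_F\|_1+d$. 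Comparing the two lines gives $\|\mu|_{\Gamma\setminus F}\|_1\le d-L<0$, which is absurd.

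The step I expect to be the main obstacle is exactly this claim --- localizing the ``mass'' of a weak$^*$-null sequence away from a prescribed finite set. This is where the structure of $c_0(\Gamma)$ is genuinely used (the analogous statement fails for general dual spaces, e.g.\ in $\ell_\infty$), and the delicate part is selecting the lifts $\mu_n$ so that they are simultaneously near-optimal off $F$ and bounded on $F$, and then invoking the $c_0(\Gamma)$-case of the lemma with the right vector. Once the claim is available, the remaining bookkeeping in the first two paragraphs is routine.
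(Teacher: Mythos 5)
Your argument is correct, but it takes a genuinely different route from the paper. The paper handles the separable case in two lines: a subspace of $c_0(\Gamma)$ obviously satisfies $\limsup\|x_n+x\|=\max(\|x\|,\limsup\|x_n\|)$ for weakly null $(x_n)$ (property $(m_\infty)$), and the Kalton--Werner duality theorem then transfers this to the stated $\ell_1$-type identity in the dual; the general case follows by a separable reduction. You instead give a self-contained proof: you lift everything to $\ell_1(\Gamma)=c_0(\Gamma)^*$ (where the identity is a one-line sliding-hump computation) and push it down to $X^*=\ell_1(\Gamma)/X^\perp$ via your localization claim. Two remarks. First, your claim --- that a weak$^*$-null sequence in $X^*$ keeps its norm when restricted to the finite-codimensional subspace $X_F$, equivalently stays at distance at least $\limsup\|x_n^*\|$ from the finite-dimensional annihilator $q(\ell_1(F))$ (with $q:\ell_1(\Gamma)\to X^*$ the restriction map) --- is essentially the paper's Lemma~\ref{l:c01} specialized to $x^*=0$; in the paper that lemma is \emph{deduced from} Lemma~\ref{m1}, so you have in effect reversed the logical order, proving the distance estimate directly and deriving Lemma~\ref{m1} from it. There is no circularity, since your proof of the claim uses only the $c_0(\Gamma)$ case. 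Second, the step you leave as ``routine'' --- choosing lifts $\mu_n$ simultaneously near-optimal off $F$ and bounded on $F$ --- does indeed work: for any admissible lift $\mu$, the element $q(\mu|_F)=x_n^*-q(\mu|_{\Gamma\setminus F})$ lies in the finite-dimensional space $q(\ell_1(F))$ with norm at most $\|x_n^*\|+\|\mu|_{\Gamma\setminus F}\|_1$, and a bounded right inverse of the finite-rank surjection $q|_{\ell_1(F)}$ onto its image lets you replace $\mu|_F$ by a uniformly bounded preimage without changing $\mu|_{\Gamma\setminus F}$. What your approach buys is independence from the $M$-ideal machinery of Kalton--Werner and from the separable reduction; what it costs is length.
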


\begin{proof}
Let us first suppose that $X$ is separable. It is obvious that for any $x\in X$ and any weakly null sequence $(x_n)$ in $X$ we have
$$\limsup\|x_n+x\|=\max (\|x\|,\limsup\|x_n\|).$$
The assertion then follows from \cite[Theorem 2.6]{KaWe} (applied for $p=\infty$).

The general case follows by a separable reduction argument. Suppose that $x^*\in X^*$ and that $(x_n^*)$ is a weak* null sequence in $X^*$. Let us consider the countable set
$$A=\{x^*\}\cup\{x_n^*:n\in\en\}\cup\{x_n^*+x^*:n\in\en\}.$$
We can find a separable subspace $Y\subset X$ such that for each $y^*\in A$ we have $\|y^*\|=\|y^*|_Y\|$.
Then the assertion follows immediately from the separable case.
\end{proof}

 The next one is a stronger variant of \cite[Lemma~1.7]{brown} or \cite[Lemma 2.3]{KaWe} for the special case of subspaces of $c_0(\Gamma)$.

\begin{lemma}
\label{l:c01}
Let $X$ be a subspace of $c_0(\Gamma)$ and $(x_n^*)$ be sequence in $X^*$ weak$^*$ converging to $x^*$. Then for any finite dimensional subspace $F\subset X^*$ we have
\[
\liminf \dist(x_n^*, F)\ge\liminf\|x_n^*\|-\|x^*\|.
\]
\end{lemma}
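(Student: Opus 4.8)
The plan is to reduce, after passing to a suitable subsequence, to a situation where Lemma~\ref{m1} can be applied verbatim. Put $L=\liminf_n\dist(x_n^*,F)$; since $(x_n^*)$ is weak$^*$ convergent it is bounded (Banach--Steinhaus), so $L$ is finite. For each $n$ choose $f_n\in F$ with $\|x_n^*-f_n\|\le\dist(x_n^*,F)+2^{-n}$; then $(f_n)$ is bounded because $\|f_n\|\le\|x_n^*\|+\|x_n^*-f_n\|$. As $F$ is finite dimensional, I would pass to a subsequence along which simultaneously $\dist(x_n^*,F)\to L$ and $f_n\to f$ in norm for some $f\in F$ (first thin out to realize the $\liminf$, then thin out once more to make the bounded sequence $(f_n)$ converge, which does not spoil $\dist(x_n^*,F)\to L$). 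Along this subsequence $\|x_n^*-f_n\|\to L$ as well, since it is squeezed between $\dist(x_n^*,F)$ and $\dist(x_n^*,F)+2^{-n}$.

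Now set $z_n^*:=(x_n^*-f_n)-(x^*-f)$. Because $x_n^*\to x^*$ weak$^*$ and $f_n\to f$ in norm, $(z_n^*)$ is weak$^*$ null, so Lemma~\ref{m1} applies with the fixed vector $x^*-f\in X^*$ and gives
\[
\limsup_n\|z_n^*+(x^*-f)\|=\|x^*-f\|+\limsup_n\|z_n^*\|.
\]
The left-hand side is $\lim_n\|x_n^*-f_n\|=L$, hence $\limsup_n\|z_n^*\|=L-\|x^*-f\|\le L$. Rewriting $x_n^*=z_n^*+x^*+(f_n-f)$ and using $\|f_n-f\|\to0$, one obtains $\liminf_n\|x_n^*\|\le\|x^*\|+\liminf_n\|z_n^*\|\le\|x^*\|+L$ along the subsequence; since the $\liminf$ over a subsequence dominates the $\liminf$ over the whole sequence, this is exactly the claimed inequality $\liminf_n\|x_n^*\|-\|x^*\|\le\liminf_n\dist(x_n^*,F)$.

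The essential point — and the only place where the special geometry of subspaces of $c_0(\Gamma)$ is used — is the exact additivity furnished by Lemma~\ref{m1}: the general ``position'' lemmas of \cite{brown} and \cite{KaWe} only yield a weaker estimate, whereas here the identity $\limsup\|z_n^*+(x^*-f)\|=\|x^*-f\|+\limsup\|z_n^*\|$ is what removes any multiplicative constant and gives the sharp conclusion. Everything else is routine bookkeeping with $\liminf$/$\limsup$; the only mild care required is the order in which the subsequences are extracted, so that passing to a norm-convergent subsequence of the near-best approximants $f_n$ does not destroy the convergence $\dist(x_n^*,F)\to L$.
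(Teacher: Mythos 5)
Your proof is correct and follows essentially the same route as the paper: both arguments pick near-best approximants in the finite-dimensional space $F$, pass to a subsequence along which they converge in norm, and then invoke the exact additivity of Lemma~\ref{m1} for a weak$^*$ null perturbation to obtain the sharp lower bound. The only cosmetic difference is that the paper works with an arbitrary $c>\liminf\dist(x_n^*,F)$ and replaces $y_n^*$ by its norm limit before applying Lemma~\ref{m1}, whereas you absorb the difference $f_n-f$ into the weak$^*$ null sequence; the bookkeeping is equivalent.
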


\begin{proof}
Let $c>\liminf \dist(x_n^*, F)$ be arbitrary. By passing to a subsequence we may assume that $\dist(x_n^*, F)<c$
for each $n\in\en$. We can thus find a sequence $(y_n^*)$ in $F$ such that $\|x_n^*-y_n^*\|<c$ for each $n\in\en$.
Since the sequence $(x_n^*)$ is bounded, the sequence $(y_n^*)$ is bounded as well. Therefore we can, up to passing to a subsequence, suppose that the sequence $(y_n^*)$ converges in norm to some $y^*\in F$. 

Then

$$\begin{aligned}c&\ge\limsup\|x_n^*-y_n^*\|=\limsup\|x_n^*-y^*\|
=\limsup\|(x_n^*-x^*)+(x^*-y^*)\|
\\ &=\limsup\|x_n^*-x^*\|+\|x^*-y^*\|
\ge\limsup\|x_n^*\|-\|x^*\|+\|x^*-y^*\|
\\& \ge \liminf\|x_n^*\|-\|x^*\|.\end{aligned}$$

The first equality follows from the fact that the sequence $(y_n^*)$ converges to $y^*$ in the norm, the third
one follows from Lemma~\ref{m1}. The remaining steps are trivial. 

This completes the proof.
\end{proof}

The next lemma is a refinement of constructions from \cite[Lemma~2.1]{qschur} and \cite[Theorem~1.1]{brown}.
During its proof we will use the following notation: if $x\in c_0(\Gamma)$ or $x\in \ell_1(\Gamma)$ and $A\subset \Gamma$, then $x\r_A$ denotes an element defined as
\[
(x\r_A)(\gamma)=\begin{cases} x(\gamma), &\gamma\in A,\\
                            0,& \gamma\in \Gamma\setminus A.
             \end{cases}               
\]

\begin{lemma}\label{l:c02} Let $X$ be a subspace of $c_0(\Gamma)$, $c>0$ and $(y_n)$ be a sequence in $\ell_1(\Gamma)=c_0(\Gamma)^*$ such that
\begin{itemize}
	\item $(y_n)$ weak$^*$ converges to $0$ in $\ell_1(\Gamma)$,
	\item $\|y_n|_X\|>c$ for each $n\in\en$.
\end{itemize}
Then for any $\eta>0$ there is a subsequence $(y_{n_k})$ such that each weak$^*$ cluster point of $(y_{n_k}|_X)$ in $X^{***}$ has norm at least $c-\eta$.
\end{lemma}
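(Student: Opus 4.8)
The plan is to produce, after passing to a suitable subsequence, a single element $\zeta^*\in B_{X^{**}}$ with $\Re\langle \zeta^*,y_{n_k}\rangle>c-\eta$ for every $k$. Granting this, the lemma follows immediately: identify $X^{**}$ isometrically with its canonical image $X^{\perp\perp}\subset c_0(\Gamma)^{**}=\ell_\infty(\Gamma)$, so that a functional $y_{n_k}|_X=q(y_{n_k})\in X^*=\ell_1(\Gamma)/X^\perp$ acts on $\zeta^*$ by $\langle y_{n_k}|_X,\zeta^*\rangle=\langle\zeta^*,y_{n_k}\rangle_{\ell_\infty(\Gamma),\ell_1(\Gamma)}$. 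If $\Lambda\in X^{***}$ is a weak$^*$ cluster point of $(y_{n_k}|_X)_k$, then, evaluation at $\zeta^*\in X^{**}$ being $\sigma(X^{***},X^{**})$-continuous, $\Lambda(\zeta^*)$ is a cluster point of the scalar sequence $\big(\langle\zeta^*,y_{n_k}\rangle\big)_k$; all its terms have real part $>c-\eta$, hence $\|\Lambda\|\ge|\Lambda(\zeta^*)|\ge\Re\Lambda(\zeta^*)\ge c-\eta$.

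Fix a small $\theta>0$, to be chosen at the end in terms of $\eta$, $c$ and $M:=\sup_n\|y_n\|_1$. Using that $(y_n)$ is weak$^*$ null, so $\|y_n\r_F\|_1\to0$ for every finite $F\subset\Gamma$, build inductively indices $n_1<n_2<\cdots$, pairwise disjoint finite sets $A_1,A_2,\dots\subset\Gamma$ and vectors $x_k\in B_X$ so that, with $C_{k-1}:=A_1\cup\cdots\cup A_{k-1}$: (i) $\|y_{n_k}-y_{n_k}\r_{A_k}\|_1<\theta2^{-k}$ and $\Re\langle y_{n_k},x_k\rangle>c-\theta2^{-k}$ (after multiplying $x_k$ by a modulus-one scalar); (ii) $A_k\cap C_{k-1}=\emptyset$ and $\|x_k\r_{\Gamma\setminus A_k}\|_\infty<\theta2^{-k}$. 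At step $k$ one first chooses $n_k$ so large that $\|y_{n_k}\r_{C_{k-1}}\|_1<\theta2^{-k-1}$; then $\|(y_{n_k}\r_{\Gamma\setminus C_{k-1}})|_X\|>\|y_{n_k}|_X\|-\theta2^{-k-1}>c-\theta2^{-k}$, and one picks a near-norming $x_k\in B_X$ for this functional that is moreover essentially supported off $C_{k-1}$, i.e. $\|x_k\r_{C_{k-1}}\|_\infty<\theta2^{-k}$; finally one sets $A_k$ equal to $(P_k\cup Q_k)\setminus C_{k-1}$, where $P_k$ is a finite set with $\|y_{n_k}-y_{n_k}\r_{P_k}\|_1<\theta2^{-k-1}$ and $Q_k=\{\gamma:|x_k(\gamma)|\ge\theta2^{-k}\}$ (finite, as $x_k\in c_0(\Gamma)$), which satisfies (i)–(ii). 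The only genuinely non-trivial point here is the existence of a near-norming $x_k$ with $\|x_k\r_{C_{k-1}}\|_\infty$ small, and this is precisely where the special geometry of $c_0(\Gamma)$ is used — in $c_0(\Gamma)$ one would just truncate, while for a subspace one truncates in $c_0(\Gamma)$ and then pushes the truncated vector back into $X$ at a cost that is controlled via Lemma~\ref{l:c01} (which prevents $(y_n|_X)$ from clustering near any fixed finite-dimensional subspace of $X^*$, such as the one spanned by functionals carried by $C_{k-1}$).

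Now set $\zeta:=\sum_k x_k\r_{A_k}$ and $\rho:=\sum_k x_k\r_{\Gamma\setminus A_k}$; by the disjointness of the $A_k$ and by (ii), both lie in $\ell_\infty(\Gamma)$ with $\|\zeta\|_\infty\le1$ and $\|\rho\|_\infty<\theta$, and $\widetilde\zeta:=\zeta+\rho$ is the $\sigma(\ell_\infty(\Gamma),\ell_1(\Gamma))$-limit of the partial sums $\sum_{k\le N}x_k$ (when paired with any $w\in\ell_1(\Gamma)$ the two constituent series converge absolutely, again by (ii) and disjointness). Since each partial sum lies in $X$ and $X^{**}$ is the weak$^*$ closure of $X$ in $\ell_\infty(\Gamma)$, we get $\widetilde\zeta\in X^{**}$ with $\|\widetilde\zeta\|\le1+\theta$. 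For each $k$, the cross terms satisfy $\sum_{j\neq k}|\langle x_j\r_{A_j},y_{n_k}\rangle|\le\|y_{n_k}\r_{\Gamma\setminus A_k}\|_1<\theta2^{-k}$ (the $A_j$, $j\neq k$, being disjoint from $A_k$), and $\Re\langle x_k\r_{A_k},y_{n_k}\rangle=\Re\langle x_k,y_{n_k}\rangle-\Re\langle x_k,y_{n_k}\r_{\Gamma\setminus A_k}\rangle>c-2\theta2^{-k}$; together with $|\langle\rho,y_{n_k}\rangle|\le\|\rho\|_\infty\|y_{n_k}\|_1<\theta M$ this gives $\Re\langle\widetilde\zeta,y_{n_k}\rangle>c-(3+M)\theta$. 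Hence $\zeta^*:=\widetilde\zeta/(1+\theta)\in B_{X^{**}}$ satisfies $\Re\langle\zeta^*,y_{n_k}\rangle>c-\eta$ for all $k$ provided $\theta$ was chosen small enough (say $\theta\le\eta/(3+M+c)$), which completes the argument as explained in the first paragraph.

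The main obstacle is the italicized step of the construction in the second paragraph: transferring the obvious $c_0(\Gamma)$-truncation of a near-norming vector back into the subspace $X$ without losing more than a prescribed amount of norm. Everything else is routine bookkeeping with the two absolutely convergent series defining $\zeta$ and $\rho$, plus the standard identification $X^{**}=X^{\perp\perp}\subset\ell_\infty(\Gamma)$ and the continuity of weak$^*$ evaluations.
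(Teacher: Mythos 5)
Your overall architecture is essentially the paper's: a gliding-hump construction of indices $n_k$, almost disjointly supported vectors $x_k\in B_X$ nearly norming $y_{n_k}|_X$, a weak$^*$ limit of the partial sums $\sum_{k\le N}x_k$ giving an element of $(1+\theta)B_{X^{**}}$ on which all $y_{n_k}|_X$ are uniformly large, and the passage to weak$^*$ cluster points in $X^{***}$ via continuity of evaluation. The bookkeeping with $\zeta$, $\rho$ and the cross terms is correct. However, the step you yourself flag as ``the only genuinely non-trivial point'' --- producing $x_k\in B_X$ with $\|x_k\r_{C_{k-1}}\|_\infty<\theta 2^{-k}$ and $\Re\langle y_{n_k},x_k\rangle>c-\theta 2^{-k}$ --- is not actually proved, and the mechanism you indicate (``truncate in $c_0(\Gamma)$ and push the truncated vector back into $X$'') is not a valid operation: for $x\in X$ the truncation $x\r_{\Gamma\setminus C_{k-1}}$ is in general neither in $X$ nor close to $X$, and there is no a priori way to return it to $X$ while keeping it norming. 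Since this is exactly where the geometry of $c_0(\Gamma)$ and the preparatory lemmas must enter, the proof as written has a hole at its crux.

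The correct route, which is the one the paper takes, is dual rather than primal. Let $Z=\{x\in X: x\r_{C_{k-1}}=0\}$; it has finite codimension in $X$, so $F=Z^\perp$ is a finite-dimensional subspace of $X^*$. By Lemma~\ref{l:c01} applied with $x^*=0$ (the sequence $(y_n|_X)$ is weak$^*$ null), $\liminf_n\dist(y_n|_X,F)\ge\liminf_n\|y_n|_X\|\ge c$, so for $n$ large enough $\dist(y_n|_X,F)>c-\theta 2^{-k}$. Since $X^*/F=(F_\perp)^*=Z^*$ isometrically, this distance equals the norm of the restriction of $y_n|_X$ to $Z$, and hence there is $x_k\in B_X$ with $x_k\r_{C_{k-1}}=0$ exactly and $|\langle y_n,x_k\rangle|>c-\theta 2^{-k}$. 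With this substitution (which only improves your estimates, as the contribution of $x_k$ on $C_{k-1}$ disappears) the rest of your argument goes through. Note that it is precisely this appeal to Lemma~\ref{l:c01}, and through it to Lemma~\ref{m1}, that makes essential use of the hypothesis $X\subset c_0(\Gamma)$; in your write-up that hypothesis is otherwise used only for formal support manipulations, which would be available in any space with a $1$-unconditional basis, where the statement is false in general.
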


\begin{proof} For $n\in\en$ set $\varphi_n=y_n|_X$.
Let $\ep\in (0,\frac{c}{6})$ be arbitrary. Without loss of generality, we may assume that $\ep<1$. 
We select strictly positive numbers $(\ep_k)$ such that $\sum_{k=1}^\infty \ep_k<\ep$.

We inductively construct elements $x_k\in X$, indices $n_1<n_2<\cdots$ and finite sets $\emptyset=\Gamma_0\subset\Gamma_1\subset\Gamma_2\subset\cdots\subset \Gamma$  such that, for each $k\in\en$,
\begin{enumerate}
\item [(a)] $\|x_k\|\le 1$, $x_k\r_{\Gamma_{k-1}}=0$ and $\|x_k\r_{\Gamma\setminus \Gamma_k}\|<\ep_k$,
\item [(b)] $|\varphi_{n_k}(x_k)|>c-\ep$ and $|\varphi_{n_k}(\sum_{i=1}^{k-1} x_i)|\le\ep\cdot \|\sum_{i=1}^{k-1} x_i\|$,
\item [(c)] if we denote $y_{n_k}^{1}=y_{n_k}\r_{\Gamma_k}$ and $y_{n_k}^{2}=y_{n_k}\r_{\Gamma\setminus \Gamma_k}$, then $\|y_{n_k}^{2}\|<\ep_k$.
\end{enumerate}

In the first step, we set $\Gamma_0=\emptyset$ and $n_1=1$. 
Since $\|\varphi_{n_1}\|>c$, there is $x_1\in B_X$ with $|\varphi_{n_1}(x_1)|>c$. 
Let us choose a finite set $\Gamma_1\subset \Gamma$ satisfying 
\[
\|x_1\r_{\Gamma\setminus \Gamma_1}\|<\ep_1\quad\text{and}\quad \|y_{n_1}\r_{\Gamma\setminus \Gamma_1}\|<\ep_1.
\]
Since the second requirement in (b) is vacuous, the first step is finished.

Assume now that we have found indices $n_1<\cdots <n_k$, finite sets $\emptyset=\Gamma_0\subset \cdots\subset \Gamma_{k}$ and elements $x_1,\dots x_k$ satisfying (a), (b) and (c).
We define an operator $R_k:X\to c_0(\Gamma)$ as
\[
R_kx=x\r_{\Gamma_k},\quad x\in X.
\]
Then $\Ker R_k$ is of finite codimension, and thus $F_k=(\Ker R_k)^\perp$ is a finite dimensional space in $X^*$.
Let $m\in\en$ be chosen such that, for each $n\ge m$,
\begin{itemize}
\item $|\varphi_n(\sum_{i=1}^{k-1} x_i)|\le\ep\cdot \|\sum_{i=1}^{k-1} x_i\|$, and
\item $\dist (\varphi_n, F_k)>c-\ep$.
\end{itemize}
(The first requirement can be fulfilled due to the fact that $(\varphi_n)$ converges weak$^*$ to $0$, and the second one due to Lemma~\ref{l:c01}.)
Let $n_{k+1}=m$ and 
\[
x_{k+1}\in (F_k)_\perp=\Ker R_k
\]
be chosen such that $\|x_{k+1}\|\le 1$ and 
\[
\varphi_{n_{k+1}}(x_{k+1})>c-\ep
\]
(we use the fact that $X^*/F_k=((F_k)_\perp)^*$).
We find a finite set $\Gamma_{k+1}\supset \Gamma_{k}$ satisfying 
\[
\|x_{k+1}\r_{\Gamma\setminus\Gamma_{k+1}}\|<\ep_{k+1}\quad\text{and}\quad\|y_{n_{k+1}}\r_{\Gamma\setminus\Gamma_{k+1}}\|<\ep_{k+1}.
\]
This finishes the construction.

For $J\in\en$, let 
\[
u_J=\sum_{i=1}^J x_i.
\]
It follows from (a) that, for each $k\in\en$ and $J>k$, we have
\begin{equation}
\label{e:c05.5}
\left\|\sum_{i=1}^k x_i\right\|<1+\ep,\quad 
\left\|\;\sum_{i=1}^{k-1} x_i\right\|< 1+\ep,\quad \left\|\;\sum_{i=k+1}^{J} x_i\right\|<1+\ep.
\end{equation}
Indeed, for $k\in\en$ and $\gamma\in \Gamma_k\setminus\Gamma_{k-1}$, we have from (a)
\[
|x_j(\gamma)|\le\begin{cases} \ep_j,& j<k,\\
1,& j=k,\\
0,& j>k,
\end{cases}
\quad j\in\en.
\]
Further, $x_k$ is bounded by $\ep_k$ on $\Gamma\setminus \bigcup_{k=1}^\infty\Gamma_k$ by (a). This observations verify \eqref{e:c05.5}.

For each $k\in\en$, we set
\[
\varphi_{n_k}^{1}=y_{n_k}^{1}\r_X\quad\text{and}\quad \varphi_{n_k}^{2}=y_{n_k}^{2}\r_X.
\]
For a fixed index $k\in\en$ and arbitrary $J>k$, we need to estimate
\begin{equation}
\label{e:c06}
|\varphi_{n_k}(u_J)|=\left|\varphi_{n_k}\left(\sum_{i=1}^{k-1} x_i\right)+\varphi_{n_k}(x_k)+\varphi_{n_k}\left(\sum_{i=k+1}^J x_i\right)\right|.
\end{equation}
The condition (b) and \eqref{e:c05.5} ensures that
\begin{equation}
\label{e:c08}
\left|\varphi_{n_k}\left(\sum_{i=1}^{k-1} x_i\right)\right|\le\ep\cdot\left\|\;\sum_{i=1}^{k-1} x_i\right\|<\ep (1+\ep).
\end{equation}

From (b) we also have
\begin{equation}
\label{e:c09}
\aligned
|\varphi_{n_k}(x_{k})|>c-	\ep.
\endaligned
\end{equation}
Finally, (a) and (c) give
\begin{equation}
\label{e:c010}
\aligned
\left|\;\varphi_{n_k}\left(\sum_{i=k+1}^J x_i\right)\right|&=\left|\left(\varphi_{n_k}^{1}+\varphi_{n_k}^{2}\right)\left(\sum_{i=k+1}^J x_i\right)\right|\\
&=\left|\;y_{n_k}^{2}\left(\sum_{i=k+1}^J x_i\right)\right|\le \ep_{k}\cdot\left\|\;\sum_{i=k+1}^J x_i\right\|\\
&<\ep_k(1+\ep).
\endaligned
\end{equation}

Using \eqref{e:c08}--\eqref{e:c010} in \eqref{e:c06}, we get
\begin{equation}
\label{e:c011}
\aligned
|\varphi_{n_k}(u_J)|&\ge c-\ep -\ep(1+\ep)-\ep_k(1+\ep)\\
&\ge c-\ep(3+2\ep)\ge c -5\ep.
\endaligned
\end{equation}
It follows from \eqref{e:c011} that, for $z_J=(1+\ep)^{-1}u_J$, we have $z_J\in B_X$ by \eqref{e:c05.5} and
\[
|\varphi_{n_k}(z_J)|>(1+\ep)^{-1}\left(c-5\ep\right),\quad k\in\en, J>k.
\]
Let $z^{**}\in B_{X^{**}}$ be a weak$^*$ cluster point of $(z_J)$. 
Then
\begin{equation}
\label{e:c012}
|\varphi_{n_k}(z^{**})|\ge (1+\ep)^{-1}\left(c-5\ep\right),\quad k\in\en.
\end{equation}
It follows that each weak$^*$ cluster point of $(\varphi_{n_k})$ has norm at least
$(1+\ep)^{-1}(c-5\ep)$.

This completes the proof, as given $\eta>0$, we can in the beginning choose $\ep$ such that
$$(1+\ep)^{-1}\left(c-5\ep\right)>c-\eta.$$
\end{proof}

Now we are ready to prove the theorem:

\begin{proof}[Proof of Theorem~\ref{t:c01}] 
Let $X$ be a subspace of $c_0(\Gamma)$ and $(x_n^*)$ be a sequence in $X^*$ bounded by a constant $M$. We consider arbitrary $0<c<\ca{x_n^*}$. We extract subsequences $(a_n)$ and $(b_n)$ from $(x_n^*)$ such that 
\begin{equation}
\label{e:c00}
c<\|a_n-b_n\|,\quad n\in\en.
\end{equation}
We denote $\varphi_n=a_n-b_n$, $n\in\en$. We extend $a_n$ to $A_n\in\ell_1(\Gamma)$ and
 $\varphi_n$ to  $z_n\in\ell_1(\Gamma)$ with preservation of the norm and set $B_n=A_n-z_n$. Then $B_n$ is an extension of $b_n$ (not necessarily preserving the norm). By passing to a subsequence if necessary, assume that $(A_n)$ converges pointwise (and hence weak$^*$ in $\ell_1(\Gamma)$) to some $A\in\ell_1(\Gamma)$ and $(B_n)$ converges pointwise to some $B\in \ell_1(\Gamma)$. (This is possible due to the fact that any sequence in $\ell_1(\Gamma)$ can be viewed as a sequence in $\ell_1(\Gamma')$ for a countable $\Gamma'\subset \Gamma$.) Then $(z_n)$ weak$^*$ converges to $A-B$.
 Set $y_n=z_n-A+B$ for $n\in\en$. Then $(y_n)$ weak$^*$ converges to $0$ and $\|y_n|_X\|>c-\|(A-B)|_X\|$ for each $n\in\en$. 
 
Let $\ep>0$ be arbitrary. By Lemma~\ref{l:c02}, there is a subsequence $(y_{n_k})$ such that 
each weak$^*$ cluster point of $(y_{n_k}|_X)$ in $X^{***}$ has norm at least
$$c-\|(A-B)|_X\|-\ep.$$

Let $a$ be a weak$^*$ cluster point of $(a_{n_k})$ in $X^{***}$. Let $(a_\tau)$ be a subnet of $(a_{n_k})$ weak$^*$ converging to $a$. Let $b$ be a weak$^*$ cluster point of the net $(b_\tau)$. Then $a$ and $b$ are weak$^*$ cluster points of $(x_n^*)$ in $X^{***}$.

Obviously $a|_X=A|_X$ and $b|_X=B|_X$ and, moreover, $a-b-(a-b)|_X=a-b-(A-B)|_X$ is a weak$^*$ cluster point of $(y_{n_k}|_X)$ in $X^{***}$. Thus
$$\|a-b-(a-b)|_X\|\ge c-\|(A-B)|_X\|-\ep.$$

Further, let $F\in (\ell_\infty(\Gamma))^*=c_0(\Gamma)^{***}$ be an extension of $a-b$ with preserving the norm. Then
$$\begin{aligned}\|a-b\|&=\|F\|=\|F|_{c_0(\Gamma)}\|+\|F-F|_{c_0(\Gamma)}\|
\ge \|F|_X\|+\|(F-F|_{c_0(\Gamma)})\r_{X^{**}}\|\\
&=\|(A-B)|_X\|+\|a-b-(a-b)|_X\|\\&\ge \|(A-B)|_X\|+c-\|(A-B)|_X\|-\ep\\
&=c-\ep.
\end{aligned}$$
(Let us remark that, for a Banach space $Y$ and $G\in Y^{***}$, we denote by $G|_Y$ the respective element of $Y^*$ canonically embedded into $Y^{***}$.) It follows that $\de{x_k^*}\ge c-\ep$. Since $\ep>0$ is arbitrary,
$\de{x_k^*}\ge c$. Hence  $\ca{x_k^*}\le \de{x_k^*}$ and the proof is completed.
\end{proof}

\section{Quantitative Schur property and quantitative Dunford-Pettis property}

It is well known that the Schur property is closely related to the Dunford-Pettis property.
Recall that a Banach space $X$ is said to have the \emph{Dunford-Pettis property} if for any Banach space $Y$ every 
weakly compact operator $T:X\to Y$ is completely continuous. Let us further recall that $T$ is \emph{weakly compact} if the image by $T$ of the unit ball of $X$ is relatively weakly compact in $Y$, and that $T$ is
\emph{completely continuous} if it maps weakly convergent sequences to norm convergent ones, or, equivalently, if it maps weakly Cauchy sequence to norm Cauchy (hence norm convergent) ones.

Obviously, any Banach space with the Schur property has the Dunford-Pettis property. Further,
any Banach space whose dual has the Schur property enjoys the Dunford-Pettis property as well.

Quantitative variants of the Dunford-Pettis property were studied in \cite{kks-adv} where two strengthenings of the Dunford-Pettis property in a quantitative way were introduced (\emph{direct quantitative Dunford-Pettis property} and \emph{dual quantitative Dunford-Pettis property}, see \cite[Definition~5.6]{kks-adv}).
Section 6 of \cite{kks-adv} shows several relations between the Schur property and the two variants ot the quantitative Dunford-Pettis properties. In this section we focus on the relationship of the quantitative Schur property and quantitative versions of the Dunford-Pettis property.

The unexplained notation and notions in this section are taken from \cite{kks-adv}. 

More specifically, the quantities $\ca[\rho^*]{\cdot}$ and $\ca[\rho]{\cdot}$ measure how far the given sequence is from being Cauchy in the Mackey topology of $X^*$ or the restriction to $X$ of the Mackey topology of $X^{**}$, respectively. The quantity $\wde{\cdot}$ is defined by taking infimum of $\de{\cdot}$ over all subsequences. Similarly for $\wca{\cdot}$, $\wca[\rho^*]{\cdot}$ and $\wca[\rho]{\cdot}$. These quantities are defined and described in detail in \cite[Section 2.3]{kks-adv}.

Further, $\dh(\cdot,\cdot)$ is the non-symmetrized Hausdorff distance, $\chi(\cdot)$ denotes the Hausdorff measure of norm non-compactness, $\omega(\cdot)$ and $\wk{\cdot}$ are measures of weak non-compactness; see \cite[Section 2.5]{kks-adv}. 	
To apply a measures of (weak) non-compactness to an operator means to apply it to the image of the unit ball (see \cite[Section 2.6]{kks-adv}).

Finally, the quantity $\cc{\cdot}$ measures how far the given operator is from being completely continuous, i.e. if $T:X\to Y$ is an operator, then
$$\cc{T}=\sup\{\ca{Tx_k}: (x_k)\mbox{ is a weakly Cauchy sequence in }B_X\},$$
see \cite[Section 2.4]{kks-adv}.

It is obvious that a Banach space $X$ with the Schur property possesses also the direct quantitative Dunford-Pettis property (see \cite[Proposition 6.2]{kks-adv}). If we assume that $X$ has a $C$-Schur property, we get the following result.

\begin{thm}
\label{p:qsch1}
Let $X$ be a Banach space with the $C$-Schur property where $C>0$.
\begin{itemize}
	\item[(i)] It holds $\ca[\rho]{x_n}\le C\de{x_n}$ for any bounded sequence $(x_n)$ in $X$. In particular, $X$ has both the direct and the dual quantitative Dunford-Pettis properties.  
	\item[(ii)] The space $X$ satisfies the following stronger version of the dual quantitative Dunford-Pettis property: If $A\subset X$ is a bounded set, then
\begin{equation}
\label{eq:qsch1.1}
\wk{A}\le \omega(A)=\chi(A)\le 2C\wk{A}.
\end{equation}                     
\end{itemize} 
\end{thm}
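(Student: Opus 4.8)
The plan is to prove (i) first and then derive (ii) from it. For (i), fix a bounded sequence $(x_n)$ in $X$; I want to bound $\ca[\rho]{x_n}$ by $C\de{x_n}$. The quantity $\ca[\rho]{x_n}$ measures how far $(x_n)$ is from being Cauchy in the restriction to $X$ of the Mackey topology of $X^{**}$, so by the description in \cite[Section 2.3]{kks-adv} it is controlled by the behaviour of $(x_n)$ tested against relatively weakly compact subsets of $X^*$. The natural approach is: given a relatively weakly compact set $K\subset B_{X^*}$ and subsequences, one extracts a further subsequence so that the differences $x_{p_k}-x_{q_k}$ form a sequence whose $\ca{\cdot}$ is arbitrarily close to $\ca[\rho]{x_n}$ (this is the standard reduction from Mackey-Cauchy to norm-Cauchy used in the theory, since uniform convergence on a weakly compact set reduces to the norm after taking differences — this is exactly the point where the Schur-type hypothesis enters). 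Then apply the $C$-Schur property to the difference sequence $(x_{p_k}-x_{q_k})$ to get $\ca{x_{p_k}-x_{q_k}}\le C\de{x_{p_k}-x_{q_k}}\le C\cdot 2\de{x_n}$ — but one must be careful: $\de{x_{p_k}-x_{q_k}}$ is the diameter of the cluster set of the differences, which is bounded by $2\de{x_n}$ only crudely; in fact the right inequality is $\de{x_{p_k}-x_{q_k}}\le\de{x_n}$ when the cluster points are handled correctly as differences of cluster points of $(x_n)$, and even this may lose the constant. I expect the cleanest route is to cite the precise relation between $\ca[\rho]{\cdot}$, $\ca{\cdot}$ of difference sequences, and $\de{\cdot}$ already proved in \cite{kks-adv}, so that $\ca[\rho]{x_n}\le C\de{x_n}$ follows formally. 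Once this inequality holds, the statements that $X$ has the direct and dual quantitative Dunford-Pettis properties are immediate from \cite[Definition~5.6]{kks-adv}: the direct version compares $\cc{T}$ with $\omega(T)$ for operators into $X$, and the dual version compares it with measures of weak non-compactness of operators, and in both cases the inequality $\ca[\rho]{x_n}\le C\de{x_n}$ supplies exactly the missing estimate (using that $\de{x_n}$ of the image sequence $(Tx_n)$ is controlled by weak non-compactness of $T$).

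For (ii), the leftmost inequality $\wk{A}\le\omega(A)$ and the equality $\omega(A)=\chi(A)$ on one side are general facts valid in any Banach space, or rather $\wk{A}\le\omega(A)$ is general while $\omega(A)=\chi(A)$ is the content of the Schur-type property: in a space with the Schur property, weak compactness equals norm compactness, and quantitatively $\omega$ and $\chi$ coincide. I would prove $\omega(A)\le\chi(A)$ (the nontrivial direction, since $\chi(A)\le\omega(A)$ is trivial... actually the trivial direction is $\omega(A)\le\chi(A)$, so the nontrivial one is $\chi(A)\le 2C\wk{A}$, which is what we want anyway). The key inequality to establish is $\chi(A)\le 2C\wk{A}$. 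Here I would argue via sequences: by the definition of $\chi$ one can find a sequence $(x_n)$ in $A$ with $\ca{x_n}$ arbitrarily close to $\chi(A)$ — more precisely $\chi(A)\le\inf_n\diam\{x_k:k\ge n\}$ up to a factor $2$ (a set of diameter $d$ can be covered by a ball of radius $d$, so $\chi\le d$; passing to the separation characterization of $\chi$ one loses at most a factor $2$). Then $\ca{x_n}\le C\de{x_n}$ by the $C$-Schur property, and $\de{x_n}\le 2\wk{A}$ because the cluster set of $(x_n)$ in $X^{**}$ lies within $\wk{A}$ of $A$ regarded inside $X^{**}$ — the diameter of that cluster set is at most $2\wk{A}$ plus the "width" of $A$ in weak$^*$ directions, which again is controlled by $\wk{A}$. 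Assembling: $\chi(A)\lesssim\ca{x_n}\le C\de{x_n}\le 2C\wk{A}$, where the factor $2$ absorbs the cumulative losses.

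The main obstacle I anticipate is bookkeeping the constants so that the factor in \eqref{eq:qsch1.1} comes out exactly $2C$ and in (i) exactly $C$, rather than some larger multiple. The reductions "Mackey-Cauchy to norm-Cauchy of differences", "$\de{\cdot}$ of a difference sequence versus $\de{\cdot}$ of the original", and "$\chi$ versus the separation/diameter characterization" each carry a potential factor of $2$, and the point of the theorem is that they interleave so that only one such factor survives in each case. I would therefore spend most of the work making the chain of inequalities $\chi(A)\le\ca{x_n}\le C\de{x_n}\le 2\wk{A}\cdot C$ tight, using that for difference sequences $\de{x_{p_k}-x_{q_k}}$ is bounded by $\dh$ of the cluster set of $(x_n)$ against itself, which is $\wk{A}$-controlled without an extra factor when one uses the non-symmetrized Hausdorff distance $\dh$ as in \cite[Section 2.5]{kks-adv}. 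Everything else is a direct citation of the framework of \cite{kks-adv}.
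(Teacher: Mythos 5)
There are two genuine gaps. First, for part (i) you miss the one observation that makes it trivial: the restriction to $X$ of the Mackey topology of $X^{**}$ is coarser than the norm topology, so $\ca[\rho]{x_n}\le\ca{x_n}$ holds for every bounded sequence directly from the definitions, and composing with the $C$-Schur inequality gives $\ca[\rho]{x_n}\le\ca{x_n}\le C\de{x_n}$ at once. Your proposed reduction (extracting subsequences, testing differences $x_{p_k}-x_{q_k}$ against weakly compact subsets of $B_{X^*}$) runs in the wrong direction for an upper bound on $\ca[\rho]{x_n}$, and you yourself concede it may lose constants before deferring to an unspecified "precise relation" in the reference. That is not a proof of the inequality with constant exactly $C$. (The two quantitative Dunford--Pettis properties then follow as you say, via condition (iv) of Theorem 5.5 and Proposition 6.2 of the cited paper.)

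Second, and more seriously, in part (ii) the step $\de{x_n}\le 2\wk{A}$ is false as stated. Knowing that $\dh\left(\clu{X}{x_n},X\right)\le\wk{A}$ only tells you that each weak$^*$ cluster point of $(x_n)$ lies within $\wk{A}$ of $X$; it does not bound the diameter of the cluster set, since distinct cluster points may be close to widely separated points of $X$. The correct statement is $\wde{x_n}\le 2\dh\left(\clu{X}{x_n},X\right)$ --- with the tilde, i.e.\ one must first pass to a suitable subsequence --- and this is a nontrivial quantitative Rosenthal-type result (Theorem 1 of the quantitative weak sequential completeness paper cited as \cite{wesecom}), not a formal manipulation. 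The paper's argument is structured around exactly this point: it first takes infima over subsequences in the $C$-Schur inequality to get $\wca{x_k}\le C\wde{x_k}$, then invokes that theorem to get $\wca{x_k}\le 2C\,\dh\left(\clu{X}{x_k},X\right)$, and finally uses $\chi(A)\le\sup\{\wca{x_k}\}$ (which holds with no extra factor of $2$, so your worry about the separation characterization of $\chi$ is also resolved by working with $\wca{\cdot}$ rather than $\ca{\cdot}$). Without identifying this external ingredient and the passage to subsequence-infimized quantities, the chain $\chi(A)\le 2C\wk{A}$ does not close. The remaining pieces --- $\wk{A}\le\omega(A)$ in general and $\omega(A)=\chi(A)$ from the Schur property --- you have essentially right.
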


\begin{proof}
The inequality in assertion (i) follows from the fact that $\ca[\rho]{x_n}\le\ca{x_n}$ for any bounded sequence $(x_n)$ in $X$ (this is an immediate consequence of definitions). Thus $X$ satisfies condition (iv) of \cite[Theorem 5.5]{kks-adv}, i.e., $X$ possesses the dual quantitative Dunford-Pettis property. Further, from \cite[Proposition~6.2]{kks-adv} we know that $X$ has the direct quantitative Dunford-Pettis property. 

(ii) First we notice that \eqref{eq:qsch1.1} is indeed a stronger version of the dual quantitative Dunford-Pettis property. Indeed, using \cite[diagramm (3.1) and formula (2.6)]{kks-adv} one can deduce from \eqref{eq:qsch1.1} the validity of condition (i) of \cite[Theorem 5.5]{kks-adv}. 
 
For the proof of \eqref{eq:qsch1.1}, let $A$ be a bounded set in $X$. If $(x_k)$ in $X$ is a bounded sequence, by taking consecutively infima in \eqref{eq:qsch1} over all subsequences we obtain
\begin{equation}
\label{eq:qsch2}
\wca{x_k}\le C \wde{x_k}.
\end{equation}
By \cite[Theorem~1]{wesecom}, 
\begin{equation}
\label{eq:wesecom}
\wde{x_k}\le 2\dh (\clu{X}{x_k},X)
\end{equation}
for any bounded sequence $(x_k)$ in an arbitrary Banach space, 
and thus \eqref{eq:wesecom} together with \eqref{eq:qsch2} yield
\begin{equation}\label{eq:wca}
\wca{x_k}\le 2C\dh (\clu{X}{x_k},X).
\end{equation}
Since obviously (cf. \cite[inequalities (2.2)]{kks-adv})
\[
\chi(A)\le \sup\{\wca{x_k}: (x_k)\text{ is a sequence in }A\},
\]
\eqref{eq:wca} yields
\begin{equation}\label{eq:wca1}
\chi(A)\le 2C\wk{A}.
\end{equation}

Since $X$ has the $C$-Schur property, it has the Schur property, and thus any weakly compact subset of $X$ is norm compact. 
Hence
\begin{equation}
\label{eq:qsch5}
\chi(A)=\omega(A).
\end{equation}
A consecutive use of \cite[inequality (2.4)]{kks-adv}, \eqref{eq:qsch5},  and \eqref{eq:wca1} gives
\[
\wk{A}\le \omega(A)=\chi(A)\le 2C\wk{A},
\]
which is the inequality \eqref{eq:qsch1.1}.
\end{proof}

If the dual $X^*$ of a Banach space $X$ possesses the Schur property, then we have by \cite[Theorem~6.3]{kks-adv} that $X$ has the dual quantitative Dunford-Pettis property and, moreover, for any Banach space $Y$ and an operator $T:X\to Y$ the following inequalities hold:
\begin{equation}
\label{eq:qsch7}
\wk[Y]{T}\le \omega(T)\le\chi(T)\le\cc{T}\le 2\omega(T^*)=2\chi(T^*)\le 4\chi(T).
\end{equation}
Thus the quantities $\chi(T)$, $\cc{T}$, $\chi(T^*)$ and $\omega(T^*)$ are equivalent in this case. However, the quantities $\omega(T)$ and $\wk[Y]{T}$ need not be in this case equivalent with the others, i.e., $X$ need not have the direct quantitative Dunford-Pettis property, see \cite[Example~10.1]{kks-adv}.  However, if we assume that $X^*$ has a quantitative version of the Schur property, we obtain that, for an operator $T$ with domain $X$,  that the compactness (both norm and weak) of $T$ and its adjoint are quantitatively equivalent to the complete continuity of $T$.

\begin{thm}
\label{t:qsch1} Let $X$ be a Banach space such that $X^*$ have the $C$-Schur property for some $C\ge 0$.  If $Y$ is a Banach space and $T:X\to Y$ is a bounded linear operator, we have

\begin{equation}
\label{eq:qsch6}
\begin{aligned}
\wk[Y]{T}\le \omega(T)&\le\chi(T)\le\cc{T}\\ &\le 2\omega(T^*)=2\chi(T^*)\le 4C\wk[X^*]{T^*}
\le8C\wk[Y]{T}.
\end{aligned}
\end{equation}
In particular, $X$ has both the direct and the dual quantitative Dunford-Pettis properties.
\end{thm}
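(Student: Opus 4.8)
The plan is to derive \eqref{eq:qsch6} by concatenating three blocks of inequalities, only the last two of which require any new work. Since $X^*$ has the $C$-Schur property it has in particular the Schur property, so \cite[Theorem~6.3]{kks-adv} applies and yields \eqref{eq:qsch7}; its initial segment
\[
\wk[Y]{T}\le \omega(T)\le\chi(T)\le\cc{T}\le 2\omega(T^*)=2\chi(T^*)
\]
is precisely the first part of \eqref{eq:qsch6}. (The equality $\omega(T^*)=\chi(T^*)$ reflects the fact that, $X^*$ being Schur, its weakly compact subsets are norm compact.)

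\emph{Second block.} To get $2\chi(T^*)\le 4C\,\wk[X^*]{T^*}$ I would apply Theorem~\ref{p:qsch1}(ii) to the Banach space $X^*$ — which carries the $C$-Schur property — choosing for the bounded set $A$ the image $T^*(B_{Y^*})\subset X^*$. By the convention that a measure of non-compactness of an operator is that of the image of its unit ball, we have $\chi(T^*)=\chi(A)$ and $\wk[X^*]{T^*}=\wk[X^*]{A}$, so the right-hand inequality of \eqref{eq:qsch1.1} becomes $\chi(T^*)\le 2C\,\wk[X^*]{T^*}$. This is the single step where the \emph{quantitative} Schur hypothesis is actually used.

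\emph{Third block, and the main obstacle.} It remains to see $4C\,\wk[X^*]{T^*}\le 8C\,\wk[Y]{T}$, i.e. the estimate $\wk[X^*]{T^*}\le 2\,\wk[Y]{T}$ for an arbitrary bounded operator between arbitrary Banach spaces — a quantitative form of Gantmacher's theorem belonging to the toolkit of \cite{kks-adv}. If one wants to see it directly: start from a sequence $(y_n^*)$ in $B_{Y^*}$ and a weak$^*$ cluster point $\xi\in X^{***}$ of $(T^*y_n^*)$; pass to a subnet along which $y_n^*\to\eta$ weak$^*$ in $Y^{***}$, so that $\xi=T^{***}\eta$ by weak$^*$-to-weak$^*$ continuity of $T^{***}$; then estimate $\dist(\xi,X^*)$ by testing against $B_{X^{**}}$ and, through $T^{**}$, reduce it to the distance from $Y$ of weak$^*$ cluster points of sequences taken from $T(B_X)$ — the factor $2$ being the usual cost of passing between the sequential quantity $\wk{\cdot}$ and the full weak$^*$-closure quantity $\omega(\cdot)$. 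This duality estimate is the only genuinely non-formal ingredient; everything else is assembly. Concatenating the three blocks gives \eqref{eq:qsch6}, and the concluding assertion is then immediate, since the resulting chain exhibits $\wk[Y]{T}$, $\omega(T)$, $\chi(T)$, $\cc{T}$, $\omega(T^*)$ and $\chi(T^*)$ as mutually equivalent up to the factor $8C$, which is exactly what the direct and the dual quantitative Dunford-Pettis properties demand (cf. \cite[Theorem~5.5, Definition~5.6]{kks-adv}).
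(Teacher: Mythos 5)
Your proof is correct and follows essentially the same route as the paper: the first five inequalities are quoted from \cite[Theorem 6.3(i)]{kks-adv}, the sixth comes from applying Theorem~\ref{p:qsch1}(ii) to the set $T^*(B_{Y^*})$ in $X^*$, and the last is the quantitative Gantmacher inequality $\wk[X^*]{T^*}\le 2\wk[Y]{T}$, which the paper simply cites as \cite[equation (2.8)]{kks-adv} rather than reproving. The only cosmetic difference is in the concluding assertion, which the paper obtains from Theorem~\ref{p:qsch1}(i) applied to $X^*$ together with \cite[Theorem~5.7]{kks-adv}, whereas you read it off the closed chain \eqref{eq:qsch6} directly; both justifications are valid.
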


\begin{proof}
 The first five inequalities are contained in \cite[Theorem 6.3(i)]{kks-adv}. By Theorem~\ref{p:qsch1} we get the sixth inequality. The last inequality follows from \cite[equation (2.8)]{kks-adv}.
 Further, $X^*$ has both the direct and dual quantitative Dunford-Pettis property by Theorem~\ref{p:qsch1}(i). Hence $X$ itself possesses both the direct and dual quantitative Dunford-Pettis property by \cite[Theorem~5.7]{kks-adv}.
\end{proof}

If we combine the previous theorem with Theorem~\ref{t:c01}, we get immeadiately.

\begin{cor} Let $X$ be a subspace of $c_0(\Gamma)$. Then $X$ has both the direct and dual quantitative Dunford-Pettis properties. Moreover, the inequalities \eqref{eq:qsch6} are satisfied with $C=1$.
\end{cor}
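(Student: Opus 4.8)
The plan is to simply chain together the two main theorems already established. First I would invoke Theorem~\ref{t:c01}: since $X$ is a subspace of $c_0(\Gamma)$, its dual $X^*$ has the $1$-Schur property, i.e.\ the $C$-Schur property with the value $C=1$. This is precisely the hypothesis required by Theorem~\ref{t:qsch1} (``$X^*$ has the $C$-Schur property for some $C\ge 0$''), so nothing further needs to be checked to apply it.

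Next I would apply Theorem~\ref{t:qsch1} with this space $X$, an arbitrary Banach space $Y$, an arbitrary bounded linear operator $T\colon X\to Y$, and the constant $C=1$. Its conclusion gives directly the chain of inequalities
\[
\wk[Y]{T}\le \omega(T)\le\chi(T)\le\cc{T}\le 2\omega(T^*)=2\chi(T^*)\le 4\wk[X^*]{T^*}\le 8\wk[Y]{T},
\]
which is exactly \eqref{eq:qsch6} with $C=1$, together with the assertion that $X$ has both the direct and the dual quantitative Dunford-Pettis properties.

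There is essentially no obstacle here: the corollary is a formal consequence of Theorem~\ref{t:c01} and Theorem~\ref{t:qsch1}, the only (trivial) point being to observe that the $1$-Schur property supplied by the former is the special case $C=1$ of the $C$-Schur property hypothesis needed by the latter. Hence the entire proof is the two-line deduction just described.
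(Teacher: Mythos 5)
Your proposal is correct and is exactly the paper's argument: the authors likewise obtain the corollary immediately by feeding the $1$-Schur property of $X^*$ from Theorem~\ref{t:c01} into Theorem~\ref{t:qsch1} with $C=1$. Nothing further is needed.
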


In case $X=c_0(\Gamma)$ Theorem 8.2 of \cite{kks-adv} yields even stronger inequalities (with $C=1/2$).
The proof of this case is done by a different method.

\smallskip

We continue by a characterization of spaces whose dual has the quantitative Schur property.
It is well known that the dual space $X^*$ of a Banach space $X$ has the Schur property if and only if $X$ has the Dunford-Pettis property and contains no copy of $\ell_1$ (see \cite[Theorem~3]{diestel}). The following theorem quantifies this assertion.

\begin{thm}
\label{t:qsch-qdp}
Let $X$ be a Banach space. Then $X^*$ has the quantitative Schur property if and only if $X$ has the direct quantitative Dunford-Pettis property and contains no copy of $\ell_1$.
\end{thm}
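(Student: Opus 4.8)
I would prove the two implications separately; the first is essentially free, the second carries the content.

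\emph{The ``only if'' direction.} Suppose $X^*$ has the $C$-Schur property for some $C\ge 1$. Then $X^*$ has the Schur property, so Diestel's theorem \cite[Theorem~3]{diestel} already gives that $X$ has the Dunford--Pettis property and contains no copy of $\ell_1$. For the quantitative half it suffices to apply Theorem~\ref{t:qsch1} to the $C$-Schur property of $X^*$: this yields that $X$ has the direct quantitative Dunford--Pettis property, which is all that is needed.

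\emph{The ``if'' direction.} The plan is to quantify the classical implication ``$\mathrm{DPP}$ and no $\ell_1$ $\Rightarrow$ $X^*$ Schur''. Fix a constant $C_X$ so that $\cc{T}\le C_X\wk[Z]{T}$ for every Banach space $Z$ and every bounded operator $T\colon X\to Z$; this is exactly the direct quantitative Dunford--Pettis property. Since the statement only asks for \emph{some} $C$, it is enough to prove $\ca{x_n^*}\le(4C_X+1)\de{x_n^*}$ for an arbitrary bounded sequence $(x_n^*)$ in $X^*$. Given $\ep>0$, I would put $d=\de{x_n^*}$, extract subsequences $(a_n),(b_n)$ of $(x_n^*)$ with $\|a_n-b_n\|>\ca{x_n^*}-\ep$, and choose $v_n\in B_X$ with $(a_n-b_n)(v_n)>\ca{x_n^*}-\ep$. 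Because $X$ contains no copy of $\ell_1$, Rosenthal's $\ell_1$-theorem lets me pass to one further subsequence along which $(v_n)$ is weakly Cauchy, say $v_n\to v^{**}$ in $(X^{**},w^*)$ with $\|v^{**}\|\le1$; after relabelling (keeping $(a_n),(b_n)$ subsequences of $(x_n^*)$) I may assume $\psi_n(v_n)>\ca{x_n^*}-\ep$ for all $n$, where $\psi_n:=a_n-b_n$. Then I would work with the operator $T\colon X\to\ell_\infty(\en)$ defined by $Tx=(\psi_n(x))_n$.

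The core of the argument is to bound $\cc{T}$ from below and $\wk[\ell_\infty]{T}$ from above. For the lower bound, $(v_n)$ is weakly Cauchy in $B_X$, hence admissible in $\cc{T}$; reading off the $k$-th coordinate of $Tv_k-Tv_l$ and letting $l\to\infty$ gives
\[
\cc{T}\ge\ca{Tv_k}\ge\limsup_k|\psi_k(v_k)-v^{**}(\psi_k)|\ge\ca{x_n^*}-\ep-\liminf_k|v^{**}(\psi_k)|,
\]
and since $a_n,b_n$ are terms of $(x_n^*)$ whose indices tend to infinity one checks $\liminf_k|v^{**}(\psi_k)|\le\inf_m\diam\{v^{**}(x_k^*):k\ge m\}\le d$, so $\cc{T}\ge\ca{x_n^*}-\ep-d$. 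For the upper bound, the key observation is that every weak$^*$ cluster point of $(\psi_n)$ in $X^{***}$ is a difference of two weak$^*$ cluster points of $(x_n^*)$ taken along a common subnet, hence has norm at most $d$; in particular $\|T^*\delta_{\mathcal U}\|\le d$ for every ultrafilter $\mathcal U$ on $\en$. A quantitative form of Grothendieck's iterated-limit criterion — the mechanism underlying inequalities such as \eqref{eq:wesecom} — then shows that any weak$^*$ cluster point $\Xi$ of a sequence $(Tx_k)$ with $x_k\in B_X$ lies within $4d$ of $\ell_\infty(\en)$ inside the bidual, i.e.\ $\wk[\ell_\infty]{T}\le 4d$. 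Combining the two estimates with the direct quantitative Dunford--Pettis property,
\[
\ca{x_n^*}-\ep-d\le\cc{T}\le C_X\wk[\ell_\infty]{T}\le 4C_Xd,
\]
and letting $\ep\to0$ yields $\ca{x_n^*}\le(4C_X+1)d$, which is the assertion.

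The step I expect to be the main obstacle is the upper estimate $\wk[\ell_\infty]{T}\le 4d$: it requires running a \emph{quantitative} double-limit argument inside the non-metrizable bidual $\ell_\infty(\en)^{**}=C(\beta\en)^{**}$, controlling the purely finitely additive ``part at infinity'' of a weak$^*$ cluster point by the measure of weak non-compactness of $\{\psi_n\}$ in $X^*$, and first reducing that measure to countable subsets so that the sequential form of the quantitative Grothendieck lemma is available. The lower estimate on $\cc{T}$, the extraction via Rosenthal's theorem, and the whole ``only if'' direction are comparatively routine, and the precise value of the constant is immaterial.
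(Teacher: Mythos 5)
Your ``only if'' direction is fine and matches the paper's (Diestel's theorem for the absence of $\ell_1$, Theorem~\ref{t:qsch1} for the direct quantitative Dunford--Pettis property). In the ``if'' direction you take a genuinely different route from the paper, and it has a genuine gap at its central step: the estimate $\wk[\ell_\infty(\en)]{T}\le 4\de{x_n^*}$ is asserted, not proved. What you actually establish is that every weak$^*$ cluster point of $(\psi_n)$ in $X^{***}$ has norm at most $d=\de{x_n^*}$; to get from there to a bound on $\dh\bigl(\wscl{T(B_X)},\ell_\infty(\en)\bigr)$ you would need (a) a genuinely quantitative double-limit argument in $C(\beta\en)^{**}$, and (b) a reduction from arbitrary points of the (non-metrizable, non-angelic) weak$^*$ closure of $T(B_X)$ to cluster points of sequences, e.g.\ via a quantitative Eberlein--\v{S}mulyan theorem, which costs further constants. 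You name this as ``the main obstacle'' and leave it unresolved, so the proof is incomplete exactly where the content lies. There is also a smaller point to settle: you take the direct quantitative Dunford--Pettis property in the form $\cc{T}\le C_X\wk[Z]{T}$, whereas the definition in \cite{kks-adv} uses the De Blasi measure $\omega(T)\ge\wk[Z]{T}$; the equivalence of these formulations is part of \cite[Theorem~5.4]{kks-adv} and should be cited, not assumed. Your lower bound $\cc{T}\ge\ca{x_n^*}-\ep-d$ and the Rosenthal extraction are correct.

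The paper sidesteps the whole operator construction by using the purely sequential characterization of the direct quantitative Dunford--Pettis property from \cite[Theorem~5.4(iv)]{kks-adv}: there is $C>0$ with $\ca[\rho^*]{x_n^*}\le C\de{x_n^*}$ for every bounded sequence in $X^*$, where $\ca[\rho^*]{\cdot}$ measures failure of Cauchyness in the Mackey topology. The only thing left to prove is then Lemma~\ref{lm-nonell1}: if $X$ contains no copy of $\ell_1$, then $\ca{x_n^*}\le 3\ca[\rho^*]{x_n^*}$. Its proof uses exactly the ingredients you already have in hand --- test vectors $v_n\in B_X$ witnessing $\ca{x_n^*}$, Rosenthal's theorem to make $(v_n)$ weakly Cauchy, and then the observation that the singletons $\{v_k\}$ and the relatively weakly compact set $\{\tfrac12(v_{n_k}-v_k)\}$ are admissible in the definition of $\ca[\rho^*]{\cdot}$ --- but it never leaves $X^*$, so no measure of weak non-compactness in $\ell_\infty^{**}$ is needed. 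If you want to salvage your approach, the missing quantitative Grothendieck step is genuinely provable (results of this type appear in the literature on quantitative weak compactness), but it is substantially harder than the three-line computation in Lemma~\ref{lm-nonell1}, and you would still need to reconcile $\wk{\cdot}$ with $\omega(\cdot)$.
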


\begin{proof}
Suppose that $X^*$ has the quantitative Schur property. Then $X$ contains no copy of $\ell_1$. Indeed, if $X$ contains an isomorphic copy of $\ell_1$, by \cite[Proposition 3.3]{pelc} the dual space $X^*$ contains an isomorphic
copy of $C(\{0,1\}^\en)^*$, hence also an isomorphic copy if $C([0,1])^*$. The space $C([0,1])^*$ fails 
the Schur property as it contains a copy of $L^1(0,1)$. Thus $X^*$ fails the Schur property as well.
Further, $X$ has the direct quantitative Dunford-Pettis property by Theorem~\ref{p:qsch1}.

For the proof of the converse implication we need the following consequence of Rosenthal's $\ell_1$-theorem.

\begin{lemma}\label{lm-nonell1} Let $X$ be a Banach space not containing an isomorphic copy of $\ell_1$. 
Then any bounded sequence $(x_n^*)$ in $X^*$ satisfies $\ca{x_n^*}\le 3\ca[\rho^*]{x_n^*}$.
\end{lemma}

\begin{proof} 
If $(x_n^*)$ is norm-Cauchy, then the inequality is obvious. So, suppose that $\ca{x_n^*}>0$ and fix any $c\in(0,\ca{x_n^*})$. Then there is a sequence of natural numbers $l_n<m_n<l_{n+1},\,n\in\en,$ and a sequence $(x_n)$ in~$B_X$ such that $|(x_{l_n}^*-x_{m_n}^*)(x_n)|>c$ for every $n\in\en$. By Rosenthal's $\ell_1$-theorem, there is a weakly Cauchy subsequence of $(x_n)$. Let us assume, without loss of generality, that $l_n=2n-1$ and $m_n=2n$ for every $n\in\en$ and that $(x_n)$ is weakly Cauchy. 

Since, for every $k\in\en$, the singleton $\{x_k\}$ is a weakly compact set in~$B_X$, there is some $n_k>k$ such that $|(x_{2n_k-1}^*-x_{2n_k}^*)(x_k)|<\ca[\rho^*]{x_n^*}+\frac1k$. Using this estimate and the fact that $\{\frac{x_{n_k}-x_k}2:k\in\en\}$ is a relatively weakly compact subset of $B_X$, we can write
\begin{eqnarray*}
c &\le& \limsup|(x_{2n_k-1}^*-x_{2n_k}^*)(x_{n_k})|\\
&\le& 2\limsup|(x_{2n_k-1}^*-x_{2n_k}^*)(2^{-1}(x_{n_k}-x_k))|+\limsup|(x_{2n_k-1}^*-x_{2n_k}^*)(x_k)|\\
&\le&2\ca[\rho^*]{x_n^*}+\limsup(\ca[\rho^*]{x_n^*}+\tfrac1k)=3\ca[\rho^*]{x_n^*}.
\end{eqnarray*}
This completes the proof.
\end{proof} 

Suppose now that $X$ has the direct Dunford-Pettis property. Then there exists $C>0$ such that
\[
\ca[\rho^*]{x_n^*}\le C\de{x_n^*}
\]
for any bounded sequence $(x_n^*)$ in $X^*$ (see \cite[Theorem~5.4(iv)]{kks-adv}). By Lemma~\ref{lm-nonell1},
\[
\ca{x_n^*}\le 3\ca[\rho^*]{x_n^*}\le 3C\de{x_n^*}
\]
for any bounded sequence $(x_n^*)$ in $X^*$.
Hence $X^*$ has the $3C$-Schur property.
\end{proof}

\section{Subspaces of $C(K)$, $K$ scattered}

It is natural to ask whether Theorem~\ref{t:c01} holds for larger class of spaces in place of $c_0(\Gamma)$. The first attempt is to consider isomorphic $\ell_1$ preduals, i.e., spaces whose dual is isomorphic to $\ell_1$. But this has no chance due to the old result of Bourgain and Delbaen \cite{BoDa} later improved by Haydon \cite{haydon-BD}. In fact, Freeman, Odell and Schlumprecht recently proved in \cite{FOS} that any Banach space with separable dual can be embedded into a space whose dual is isomorphic to $\ell_1$.

The second attempt is to consider isometric $\ell_1$ preduals, i.e., spaces whose dual is isometric to $\ell_1$ (or, more generally, $\ell_1(\Gamma)$). We focus on the case $C(K)$, $K$ scattered. We can substitute $C(K)$ for $c_0(\Gamma)$ if and only if $K$ has finite Cantor-Bendixson rank. But, of course, the constant $1$ should be substituted by a constant depending on the height of $K$. The positive part of this result is contained in Theorem~\ref{t:vyskan} below which essentially follows from the Bessaga-Pe\l{}czy\'nski classification of $C(K)$, $K$ countable. In Example~\ref{exa:vn} we show that the constant really depends on the height. This example can be viewed as an approximation of the example constructed in \cite{PeSz} which is recalled as a part of Example~\ref{exa:ps} below.

\begin{thm}\label{t:vyskan} Denote for $n\in \en$ the Banach-Mazur distance of $c_0$ and $C[0,\omega^n]$ by $C_n$. Let  $K$ be a compact space  satisfying $K^{(n+1)}=\emptyset$ for some $n$ and let $X$ be a Banach space isometric to a subspace of $C(K)$. Then $X^*$ has the $C_{n+1}$-Schur property.
\end{thm}

\begin{proof} Let $n$, $K$ and $X$ satisfy the assumptions. Firstly, we will show that without loss of generality we may assume that $X$ is separable.

Indeed, let $(x_k^*)$ be any bounded sequence in $X^*$. Denote by $Z$ the closed linear span of this sequence. Then $Z$ is separable, let $D$ be a countable norm-dense subset of $Z$. It is now easy to find a separable subspace $Y\subset X$ such that
$\|x^*|_Y\|=\|x^*\|$ for each $x^*\in D$. Then the mapping $x^*\mapsto x^*|_Y$ is an isometric injection of $Z$ into $Y^*$.
Therefore the quantities $\ca{x_k^*}$ and $\de{x_k^*}$ are the same when computed in $X^*$, $Z$ or $Y^*$. Therefore, if we know that $Y^*$ has the $C_{n+1}$-Schur property, we deduce that  $\ca{x_k^*}\le C_{n+1}\de{x_k^*}$. Since $(x_k^*)$ was arbitrary, this proves the $C_{n+1}$-Schur property of $X^*$.

So, in the rest of the proof we will suppose that $X$ is separable. Let $\tilde X$ be the closed algebra generated by $X$ and constant function $1$. Then $\tilde X$ is canonically isometric to $C(L)$, where $L$ is a quotient of $K$. (This is a well-known consequence of the Stone-Weierstrass theorem: Define on $K$ an equivalence $\sim$ by $k\sim l$ if and only if
$x(k)=x(l)$ for $x\in \tilde X$ (equivalently for $x\in X$). Then $L=K\big/_\sim$ is a compact space and $\tilde X$ is isometric to $C(L)$.) Since $X$ is separable, $\tilde X$ is separable as well, hence $L$ is metrizable. Further, since $K^{(n+1)}=\emptyset$, we get also $L^{(n+1)}=\emptyset$. (Indeed, let $q$ be the quotient mapping of $K$ onto $L$.
It is easy to check that $L'\subset q(K')$ and by induction we get $L^{(k)}\subset q(K^{(k)})$ for $k\in\en$.)

Therefore, without loss of generality $K$ is countable. It follows that $K$ is homeomorphic to $[0,\alpha]$ for an ordinal $\alpha<\omega^{n+1}$. Since $C[0,\alpha]$ is isometric to a subspace of $C[0,\beta]$ for $\alpha<\beta$, $X$ is isometric to a subspace of $C[0,\omega^{n+1}]$. This space is isomorphic to $c_0$ by the Bessaga-Pe\l czy\'nski classification of $C(K)$, $K$ countable. Let $d>C_{n+1}$ be arbitrary. It follows that there is an onto isomorphism $T:C[0,\omega^{n+1}]\to c_0$ with $\|T\|\cdot\|T^{-1}\|<d$.  Then $T(X)$ is an isometric subspace of $c_0$, so $T(X)^*$ has the $1$-Schur property by Theorem~\ref{t:c01}. Further, $S=(T|_X)^*$ is an isomorphism of $X^*$ onto $T(X)^*$ with $\|S\|\cdot\|S^{-1}\|<d$.
Let $(x_n^*)$ be a bounded sequence in $X^*$. Then
$$\ca{x_n^*}\le \|S^{-1}\|\ca{S x_n^*} = \|S^{-1}\|\de{S x_n^*}\le\|S^{-1}\|\cdot\|S\|\de{x_n^*}\le d \de{x_n^*}.$$
Hence $X^*$ has the $d$-Schur property. Since $d>C_{n+1}$ was arbitrary, $X^*$ has the $C_{n+1}$-Schur property.
\end{proof}

\begin{thm}\label{exa:vn} For each $n\in\en$ there exists a Banach space $X_n$ with the following properties.
\begin{itemize}
	\item[(i)] $X_n$ is isomorphic to $c_0$.
	\item[(ii)] $X_n$ is isometric to a subspace of $C[0,\omega^{n+1}]$.
	\item[(iii)] There are sequences $(e_k)$ in $X_n$ and $(e_k^*)$ in $X_n^*$ with the following properties:
	\begin{itemize}
	  \item[(a)] $\|e_k\|_n=1$ for each $k\in\en$.
	  \item[(b)] The sequence $(e_k)$ converges weakly to zero in $X_n$.
	  \item[(c)] For any $x^{**}\in X_n^{**}$ with $\|x^{**}\|\le 1$ we have $\limsup |x^{**}(e_k^*)|\le \frac 2n$.
	  \item[(d)] $x_k^*(x_k)=1$ for each $k\in \en$.
  \end{itemize}
  In particular, $X_n$ does not have the direct quantitative Dunford-Pettis property in the sense of \cite[Theorem 5.4(iii)]{kks-adv} with constant $C<\frac n2$ and $X_n^*$ does not have the $C$-Schur property for $C<\frac n{16}$.
\end{itemize}
\end{thm}

\begin{proof} For $x=(x_k)\in c_0$ and $p\in\en$ set
$$A_p(x) = \sup \{ |x_p+x_{i_1}+\dots+x_{i_p}| : p<i_1<i_2<\dots< i_p \}.$$
It is easy to check that
$$\max\left(|x_p|,\frac12\max\{|x_k|:k> p\}\right)\le A_p(x)\le (p+1)\|x\|_\infty$$
for any $x\in c_0$. 
Next let us fix some $n\in\en$. For $x\in c_0$ we set
$$\|x\|_n=\max\{A_p(x) : 1\le p\le n\}.$$
It follows that $\|\cdot\|_n$ is an equivalent norm on $c_0$. Set $X_n=(c_0,\|\cdot\|_n)$.
Then (i) is obviously fulfilled.

Let us show (ii). For $k\in\en$ let $\Lambda_k$ be the subset of $[0,\omega]^k$ formed by non-decreasing sequences equipped with the lexicographic order. Then it is easy to check that $\Lambda_k$ is order-isomorphic to the ordinal interval $[0,\omega^k]$. Further, set $\Lambda=\bigcup_{k=1}^{n+1}\Lambda_k$. Let us define an order on $\Lambda$ such that the shorter sequences are smaller and $\Lambda_k$ is ordered lexicographically. The set $\Lambda$ equipped with this order is order-isomorphic to the ordinal interval $[0,\omega^{n+1}]$.
Let us define a mapping $\varphi:X_n\to \er^\Lambda$ by the formula
$$\varphi(x)(i_1,\dots,i_k)=\begin{cases}\begin{array}{r} x(i_1)+\dots+x(i_l)\\ \quad \end{array} & 
\begin{array}{l} \mbox{if } 1\le i_1\le n \mbox{ and $l\le k$ is maximal} \\
\mbox{such that } l-1\le i_1<i_2<\dots<i_l,\end{array}  \\ 0 & \mbox{ otherwise},\end{cases}$$
where we use the convention that $x(\omega)=0$.
Then $\varphi$ is a well-defined isometry of $X_n$ into $\ell_\infty(\Lambda)$ and, moreover, $\varphi(X_n)\subset C(\Lambda)$ (where $\Lambda$ is considered with the order topology). 

Indeed, the inequality $\|x\|_n\le \|\varphi(x)\|_\infty$ is obvious.  To see the converse one  we fix $l$ and $i_1,\dots,i_l$ such that $1\le i_1\le n$ and $l-1\le i_1<i_2<\dots<i_l$. First suppose that $i_l<\omega$. If $l-1=i_1$, then
$$|x(i_1)+\dots+x(i_l)|\le A_{l-1}(x)\le \|x\|_n.$$
If $l-1<i_1$, then
$$|x(i_1)+\dots+x(i_l)|=\lim_{m\to\infty}|x(i_1)+\dots+x(i_l)+\sum_{j=1}^{i_1-l+1} x(i_l+k+j)|\le A_{i_1}(x)\le\|x\|_n.$$
If $i_l=\omega$, then $x(i_l)=0$, hence
$$|x(i_1)+\dots+x(i_l)|=|x(i_1)+\dots+x(i_{l-1})|\le\|x\|_n$$
by the previous case. This completes the proof that $\varphi$ is an isometry.

Finally, let us show that $\varphi(x)$ is a continuous function for each $x\in X_n$. Fix $x\in X_n$ and $\boldsymbol i=(i_1,\dots,i_k)\in\Lambda$. If $i_k<\omega$, then  $\boldsymbol i$ is an isolated point of $\Lambda$, so $\varphi(x)$ is continuous at this point. So, suppose that $i_k=\omega$. If $i_1>n$, then
$$\{(j_1,\dots,j_k): j_1>n\}=\{\boldsymbol j\in\Lambda_k: \boldsymbol j> (n,\omega,\dots,\omega)\}$$ is an open set containing $\boldsymbol i$ on which $\varphi(x)$ is zero. Hence $\varphi(x)$ is continuous at $\boldsymbol i$. Similarly, if $i_1=0$, then
$$\{(j_1,\dots,j_k): j_1=0\}=\{\boldsymbol j\in\Lambda_k: \boldsymbol j< (1,1,\dots,1)\}$$ is an open set containing $\boldsymbol i$ on which $\varphi(x)$ is zero. Next suppose that $1\le i_1\le n$. Let $m$ be the smallest index such that $i_m=\omega$. Necessarily $m\ge 2$. For $r\in\en$, $r\le i_{m-1}$, let
$$V_r=\{(i_1,\dots,i_{m-1},j_m,j_{m+1},\dots,j_k): j_m>r\} = \big((i_1,\dots,i_{m-1},r,\omega,\dots,\omega),\boldsymbol i\big].$$
These sets form a neighborhood basis of $\boldsymbol i$ in $\Lambda$. Let $l$ be maximal such that
$l-1\le i_1<i_2<\dots<i_l$. Necessarily $l\le m$. If $l<m-1$, then $\varphi(x)$ is constant on $V_{i_{m-1}}$.
If $l=m-1$, then necessarily $l-1=i_1$ and hence $\varphi(x)$ is again constant on $V_{i_{m-1}}$. If $l=m$, then
$\varphi(x)(\boldsymbol i)=x(i_1)+\dots+x(i_{m-1})$. For $\boldsymbol j\in V_r$ we have
$$|\varphi(x)(\boldsymbol i)-\varphi(x)(\boldsymbol j)|\le (k-l+1)\sup_{j\ge r}|x(j)|.$$
Since $x\in c_0$, this shows that $\varphi(x)$ is continuous at $\boldsymbol i$ and completes the proof that $\varphi(x)\in C(\Lambda)$.

Finally, let us prove (iii).
Let $(e_k)$ be the canonical basis of $X_n$. It follows from the definition that $\|e_k\|_n=1$ for each $k$, hence (a) holds.
Moreover, $e_k$ converges weakly to zero, as it is the case in $c_0$, so (b) holds as well.. Let $(e_k^*)$ be the sequence of biorthogonal functionals to $(e_k)$. Then clearly (d) holds.

Let us show (c). Fix any $x^{**}\in X_n^{**}$ satisfying $\|x^{**}\|\le 1$. Suppose for contradiction that $\limsup |x^{**}(e_k^*)|> \frac 2n$. Then there is $\eta>\frac2n$ such that $|x^{**}(e_k^*)|>\eta$ for infinitely many $k$. Without loss of generality we may suppose that $x^{**}(e_k^*)>\eta$ for infinitely many $k$ (otherwise we would replace $x^{**}$ by $-x^{**}$).
Therefore we can find indices $n<i_1<i_2<\dots<i_n$ such that $x^{**}(e_{i_j}^*)>\eta$ for $j=1,\dots,n$. By Goldstine theorem there is $x\in B_{X_n}$ with $e_{i_j}^*(x)>\eta$ for $j=1,\dots,n$. Then
$$1\ge\|x\|_n\ge A_n(x) \ge |x_n+x_{i_1}+\dots+x_{i_n}|\ge n\eta-|x_n|\ge n\eta-A_n(x)\ge n\eta-1,$$
hence $\eta\le\frac2n$, a contradiction.

It remains to prove the `in particular' part. It follows from (c) that any $w^*$-cluster point of $(e_k^*)$ in $X_n^{***}$ has norm at most $\frac2n$. Now it immeadiately follows that the space $X_n$ does not satisfy the condition (iii) of \cite[Theorem 5.4]{kks-adv} for $C<\frac n2$. Further, if $X_n^*$ has the $C$-Schur property, by Theorem~\ref{p:qsch1}(i) $X_n^*$ satisfies the condition (iv) of \cite[Theorem 5.6]{kks-adv} with constant $C$. It follows from the proof of the quoted theorem that then $X_n^*$ satisfies the condition (iii) of that theorem with constant $8C$. Finally, by \cite[Remark 5.8]{kks-adv} the space $X_n$ satisfies the condition (iii) of \cite[Theorem 5.4]{kks-adv} with the same constant $8C$. By the above we get $8C\le\frac n2$, hence $C\le\frac n{16}$ and the proof is completed.
\end{proof}

\begin{example}\label{exa:ps} There are Banach spaces $Y_1$ and $Y_2$ with the following properties:
\begin{itemize}
	\item[(i)] Both $Y_1$ and $Y_2$ are isometric to subspaces of $C[0,\omega^\omega]$.
	\item[(ii)] $Y_1$ fails the Dunford-Pettis property, so $Y_1^*$ fails the Schur property.
	\item[(iii)] $Y_2^*$ has the Schur property, so $Y_2$ has the dual quantitative Dunford-Pettis property.
	 $Y_2$ fails the direct quantitative Dunford-Pettis property 
\end{itemize}
\end{example}

\begin{proof} The existence of $Y_1$ is a result of \cite{PeSz}. The above spaces $X_n$ are in a sense approximations of $Y_1$. For the sake of completeness let us recall the definition of $Y_1$.

The quantity $A_p(x)$ defined above has sense for any $x\in\er^\en$. Furher, it is finite if and only if $x$ is bounded.
The space $(Y_1,\|\cdot\|)$ is defined as
$$Y_1=\{x\in \ell_\infty: A_p(x)\to 0\},\qquad \|x\|=\sup\{A_p(x):p\in\en\}.$$
It is proved in \cite{PeSz} that $Y_1$ is isometric to a subspace of $C[0,\omega^\omega]$, 
the canonical basis $(e_k)$ of $Y_1$ is unconditional, the orthogonal functionals $(e_k^*)$ form also an uncoditional basis of $Y_1^*$, $e_k$ weakly converge to zero, $e_k^*$ as well, while $e_k^*(e_k)=1$. This proves the failure of the Dunford-Pettis property.

The space $Y_2$ can be taken to be the $c_0$-sum of the spaces $X_n$, $n\in\en$. Then all the properties easily follow.
\end{proof}

In view of the previous example the following question seems to be natural.

\begin{question} Let $X$ be a Banach space isometric to a subspace of $C(K)$ with $K$ scattered. Suppose that $X$ has the Dunford-Pettis property. Does $X^*$ have the Schur property? Does $X$ have the dual quantitative Dunford-Pettis property?
\end{question}

A related topic is the study of spaces having hereditary Dunford-Pettis property, i.e., spaces all whose subspaces enjoy the Dunford-Pettis property. Within $C(K)$ spaces they are exactly those such that $K$ has finite height (as explicitely formulated in \cite[Theorem 1]{cembranos} as a consequence of \cite{PeSz}). Further, spaces with the Schur property enjoy hereditary Dunford-Pettis property as well. Further, the space constructed by Hagler in \cite{hagler} has also hereditary Dunford-Pettis property by \cite[Proposition 2]{cembranos}. It seems to us that the following questions are interesting.

\begin{question} \ 
\begin{itemize}
	\item[(1)] Is the space $Y_2$ from Example~\ref{exa:ps} hereditarily Dunford-Pettis?
	\item[(2)] Let $X\subset C(K)$ with $K$ scattered be hereditarily Dunford-Pettis. Is $X$ contained in $C(L)$ for some $L$ with finite height?
	\item[(3)] Does $X^*$ have the Schur property for any subspace of the space from \cite{hagler}?
\end{itemize}
\end{question}

\section{Subspaces of the space of compact operators}

The space $K(\ell_2)$ of all compact operators on the Hilbert space $\ell_2$ can be viewed as a non-commutative version of $c_0$ and its dual $N(\ell_2)$, the space of all nuclear operators on $\ell_2$ equipped with the nuclear norm, can be viewed as a non-commutative version of $\ell_1$. The non-commutative versions share many properties of the commutative ones, but Schur property and Dunford-Pettis property are essentially commutative.

Indeed, $N(\ell_2)$ does not have the Schur property and, moreover, $K(\ell_2)$ does not enjoy the Dunford-Pettis property. It is witnessed by the following easy example. Let $(e_n)$ denote the standard basis in $\ell_2$. Consider the operators $T_n(x)=\la x,e_1\ra e_n$, $x\in\ell_2$, and $S_n(x)=\la x,e_n\ra e_1$. These operators are rank-one operators, thus they are nuclear and hence compact. Moreover, both sequences converge weakly to $0$ both in $K(\ell_2)$ and $N(\ell_2)$. The Schur property of $N(\ell_2)$ can be disproved by observing that $\|S_n\|=\|T_n\|=\|e_1\|\|e_n\|=1$. Moreover, the failure of the Dunford-Pettis property of $K(\ell_2)$ follows by
the fact that $\operatorname{Tr}(S_nT_n)=1$.

This easy observation was strengthened in \cite{sa-ty}, where the authors show that a subspace of $K(\ell_p)$,
the space of compact operators on $\ell_p$ enjoys the Dunford-Pettis property if and only if it is isomorphic
to a subspace of $c_0$ (i.e., only in the ``commutative case''). Theorem~\ref{t:c01} enables us to complement and strengthen their result to show that such a space has automatically a quantitative Dunford-Pettis property.

More precisely, we prove the following:

\begin{thm}
\label{t:kop}
Let $X$ be a subspace of the space $K(\ell_p)$ of compact operators on $\ell_p$ where $1<p<\infty$. Then the following assertions are equivalent:
\begin{enumerate}
\item[(i)] $X$ has the Dunford-Pettis property.
\item[(ii)] $X^*$ has the Schur property.
\item[(iii)] $X$ is isomorphic to a subspace of $c_0$. Moreover, in this case, there is for each $\ep>0$ an  isomorphic embedding $T:X\to c_0$ such that $\|T\|\|T^{-1}\|<4+\varepsilon$.
\item[(iv)] $X^*$ has the $4$-Schur property.
\item[(v)] For each Banach space $Y$ and each bounded linear operator $T:X\to Y$, the inequalities \eqref{eq:qsch6} hold with $C=4$.
\item[(vi)] The space $X$ has both the dual and the direct quantitative Dunford-Pettis properties.
\end{enumerate}
\end{thm}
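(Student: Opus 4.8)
The plan is to run the implications in a cycle, using the major results already established together with known structural facts about $K(\ell_p)$. The backbone will be (iii) $\Rightarrow$ (iv) $\Rightarrow$ (v) $\Rightarrow$ (vi) $\Rightarrow$ (i) $\Rightarrow$ (ii) $\Rightarrow$ (iii), which keeps track of the quantitative constant $4$ throughout.

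First I would dispose of the "soft" arrows. The implication (i) $\Rightarrow$ (ii) is exactly the cited result of Saksman--Tylli from \cite{sa-ty}: a subspace of $K(\ell_p)$ with the Dunford-Pettis property embeds isomorphically into $c_0$, and for a subspace of $c_0$ the dual has the Schur property (indeed even the $1$-Schur property by Theorem~\ref{t:c01}). The implication (ii) $\Rightarrow$ (iii) is the classical theorem of Diestel (a space whose dual has the Schur property contains no copy of $\ell_1$ and has the Dunford-Pettis property); again combined with \cite{sa-ty} one gets the embedding into $c_0$. The quantitative embedding constant $4+\ep$ in (iii) does not follow from these soft arguments, however — this is the point that needs care, and I return to it below. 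The implications (iv) $\Rightarrow$ (v), (v) $\Rightarrow$ (vi), and (vi) $\Rightarrow$ (i) are immediate: (iv) $\Rightarrow$ (v) is Theorem~\ref{t:qsch1} applied with $C=4$; (v) $\Rightarrow$ (vi) reads off the direct and dual quantitative Dunford-Pettis properties from the chain \eqref{eq:qsch6}; and (vi) $\Rightarrow$ (i) holds because any quantitative Dunford-Pettis property implies the plain Dunford-Pettis property.

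The heart of the matter is (iii) $\Rightarrow$ (iv), together with supplying the constant $4+\ep$ in (iii). For (iii) $\Rightarrow$ (iv): if $T\colon X\to c_0$ is an isomorphic embedding with $\|T\|\,\|T^{-1}\|<4$, then $Z=T(X)$ is a subspace of $c_0\subset c_0(\Gamma)$, so by Theorem~\ref{t:c01} the dual $Z^*$ has the $1$-Schur property; transporting the quantities $\ca{\cdot}$ and $\de{\cdot}$ through the isomorphism $T^*\colon Z^*\to X^*$ costs a factor at most $\|T\|\,\|T^{-1}\|<4$ in the comparison $\ca{\cdot}\le C\de{\cdot}$, giving the $4$-Schur property of $X^*$. (More precisely one checks $\ca{S x_n^*}\le\|S\|\ca{x_n^*}$ and $\de{x_n^*}\le\|S^{-1}\|\,\de{Sx_n^*}$ for an isomorphism $S$, and combines.) So the whole theorem hinges on producing, in (iii), an embedding into $c_0$ with distortion strictly less than $4$. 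The obstacle is that the Saksman--Tylli argument is qualitative; I would instead trace through its mechanism, which goes via the fact that a DPP subspace of $K(\ell_p)$ contains no copy of $\ell_1$ and, by a result on the structure of such subspaces, the restriction of the natural "coordinate" maps yields an embedding whose distortion can be controlled. A clean route is: the DPP plus absence of $\ell_1$ forces $X^*$ to have the Schur property (Diestel), hence by Theorem~\ref{t:qsch-qdp} combined with a more careful bookkeeping one recovers a $4$-Schur bound, and then one uses a converse embedding principle — a space whose dual has the $C$-Schur property and which sits in $K(\ell_p)$ embeds into $c_0$ with distortion related to $C$; I expect the sharp constant $4$ to come out of the estimate $\cc{T}\le 4\chi(T)$ type inequalities in \eqref{eq:qsch7}–\eqref{eq:qsch6} specialized to the identity-type map, exactly as the factor $4$ appears there.

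I expect the main difficulty to be precisely this quantitative embedding step: showing the distortion constant is $<4+\ep$ and not merely finite, and confirming that the three occurrences of "$4$" in (iii), (iv), (v) are mutually consistent under the transfer estimates. Everything else is a routine assembly of Theorems~\ref{t:c01}, \ref{t:qsch1}, \ref{p:qsch1}, and \ref{t:qsch-qdp} with the cited results of Diestel and Saksman--Tylli.
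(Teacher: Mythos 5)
Your overall architecture (a cycle through (iii)$\Rightarrow$(iv)$\Rightarrow$(v)$\Rightarrow$(vi)$\Rightarrow$(i), the soft arrows via Diestel and Saksman--Tylli, and the transfer of the $1$-Schur property of $(T(X))^*$ through $T^*$ to get (iii)$\Rightarrow$(iv)) matches the paper, and your transfer computation for (iii)$\Rightarrow$(iv) is correct --- in fact cleaner than the paper's, which carries a stray factor $2$. You also correctly identify the one non-routine step: producing, from the Dunford--Pettis property alone, an embedding $T:X\to c_0$ with $\|T\|\|T^{-1}\|<4+\ep$.

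But that step is exactly where your proposal fails. The route you sketch --- pass to the $C$-Schur property of $X^*$ via Theorem~\ref{t:qsch-qdp} and then invoke a ``converse embedding principle'' relating the Schur constant to a $c_0$-embedding distortion --- does not work. Theorem~\ref{t:qsch-qdp} gives a $3C$-Schur bound where $C$ is the (unspecified, a priori uncontrolled) constant in the direct quantitative Dunford--Pettis property of $X$, so no numerical bound, let alone $4$, comes out of it; and no result in the paper (or in \eqref{eq:qsch7}--\eqref{eq:qsch6}, which concern measures of non-compactness of operators, not embeddings) converts a quantitative Schur property of $X^*$ into an embedding of $X$ into $c_0$ with controlled distortion. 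Your premise that ``the Saksman--Tylli argument is qualitative'' is also the wrong diagnosis: the paper's actual proof of (i)$\Rightarrow$(iii) is to open up the Saksman--Tylli construction, which factors the embedding as $\psi\circ\phi_A$ with $\phi_A$ from their Lemma~1 and $\psi$ from their Lemma~2; the estimate $\|\psi\|\|\psi^{-1}\|\le 4$ is already explicit on p.~420 of \cite{sa-ty}, and $\phi_A$, which a priori satisfies $\|\phi_A\|\|\phi_A^{-1}\|\le 3$, becomes an almost isometry with $\|\phi_A\|\|\phi_A^{-1}\|\le\frac{1+\ep}{1-\ep}$ after replacing the constant $\frac14$ in their formula~(3) by $\frac\ep2$. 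The constant $4$ is thus a structural feature of the $K(\ell_p)$ construction, not something recoverable from Schur-type constants, and without this inspection of \cite{sa-ty} your proof of (iii) --- and hence of the specific constants in (iv) and (v) --- is incomplete.
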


\begin{proof} The implication (ii) $\Rightarrow$ (i) is well known (see \cite[Theorem 3]{diestel}). 

(i) $\Rightarrow$ (iii)
If  $X\subset K(\ell_p)$ has the Dunford-Pettis property, it is embeddable into $c_0$ by \cite[Theorem~1]{sa-ty}. Moreover, the constant of embedding can be explicitly computed from \cite[Lemma~1 and~2]{sa-ty}. Indeed, the embedding $T:X\to c_0$ is constructed as the composition $\psi\circ\phi_A$, where $\phi_A$ is provided by \cite[Lemma~1]{sa-ty} and $\psi$ is provided by \cite[Lemma~2]{sa-ty}.
The operator $\psi$ satisfies $\|\psi\|\|\psi^{-1}\|\le 4$ by \cite[p. 420]{sa-ty}.
Further, $\phi_A$ satisfies $\|\phi_A\|\|\phi_A^{-1}\|\le 3$ (see the computation in \cite[p. 418]{sa-ty}), but it can be easily modified to be an almost isometry. Indeed, if we replace in \cite[formula (3) on p. 420]{sa-ty} the number $\frac14$ by $\frac\ep2$, then we will obtain $\|\phi_A\|\|\phi_A^{-1}\|\le\frac{1+\ep}{1-\ep}$. This completes the proof.
 
The implication (iii) $\Rightarrow$ (iv) follows from Theorem~\ref{t:c01}. Indeed, let $T:X\to c_0$ be an embedding with $\|T\|=1$ and $\|T^{-1}\|\le 4+\ep$. Let $(x^*_n)$ be a bounded sequence in $X^*$. Then $((T^*)^{-1}x_n^*)$ is a bounded sequence in $(T(X))^*$ satisfying $\de{(T^*)^{-1}x_n^*}\le(4+\ep)\de{x_n^*}$. 
By Theorem~\ref{t:c01} we get $\ca{(T^*)^{-1}x_n^*}\le(4+\ep)\de{x_n^*}$, hence
$\ca{x_n^*}\le(4+\ep)\de{x_n^*}$ as well. Since $\ep>0$ is arbitrary, the proof is finished.

The implications (iv) $\Rightarrow$ (v) and (v) $\Rightarrow$ (vi) follows from Theorem~\ref{t:qsch1}.
Finally, the implications (vi) $\Rightarrow$ (i) and (iv) $\Rightarrow$ (ii) are trivial.
\end{proof}

\def\cprime{$'$}

\end{document}